\newcommand{\arxiv}[1]{%
 \href{https://arxiv.org/pdf/#1}{ArXiv:#1}}
\newcommand\blue[1]{\textcolor{blue}{#1}}
\newtheorem{theorem}{Theorem}
\newaliascnt{lemma}{theorem}
\newtheorem{lemma}[lemma]{Lemma}
\newaliascnt{proposition}{theorem}
\newtheorem{proposition}[proposition]{Proposition}
\newaliascnt{corollary}{theorem}
\newaliascnt{conjecture}{theorem}
\newtheorem{conjecture}[conjecture]{Conjecture}
\newaliascnt{openQ}{theorem}
\newaliascnt{quest}{theorem}
\newaliascnt{questx}{conjx}
\theoremstyle{definition}
\newaliascnt{defn}{theorem}
\newtheorem{defn}[defn]{Definition}
\newaliascnt{example}{theorem}
\newaliascnt{rem}{theorem}
\def\tagform@#1{\maketag@@@{\ignorespaces#1\unskip\@@italiccorr}}
\let\orgtheequation\theequation
\def\theequation{(\orgtheequation)}
\def\equationautorefname~{}
\newcommand{\B}{{\mathbb B}}
\newcommand{\R}{{\mathbb R}}
\newcommand{\RP}{{\mathbb{RP}}}
\newcommand{\bu}{\mathbf u}
\newcommand{\bv}{\mathbf v}
\newcommand{\V}{{\text{Vol}}}
\let\oldmarginnote\marginnote
\renewcommand{\marginnote}[1]{\oldmarginnote{\tiny \blue{#1}}}
\begin{document}
\title[Second Laplacian eigenvalue on real projective space]{Upper Bound on the Second Laplacian Eigenvalue on Real Projective Space}

\keywords{Spectral theory, shape optimization}
\subjclass[2020]{\text{Primary 35P15. Secondary 58C40, 58J50}}

	\begin{abstract}
In this paper, we prove an upper bound on the second non-zero Laplacian eigenvalue on $n$-dimensional real projective space. The sharp result for 2-dimensions was shown by Nadirashvili and Penskoi and later by Karpukhin when the metric degenerates to that of the disjoint union of a round projective space and a sphere. That conjecture is open in higher dimensions, but this paper proves it up to a constant factor that tends to 1 as the dimension tends to infinity. Also, we introduce a topological argument that deals with the orthogonality conditions in a single step proof. 
	\end{abstract}
	
\author[]{Hanna N. Kim}
\address{Department of Mathematics, University of Illinois, Urbana--Champaign, IL 61801, U.S.A.}
\email{nekim2@illinois.edu}

	\maketitle 
	
	
\section{\bf Introduction and results} \label{sec:intro}

For $n$-dimensional real projective space $\RP^n$, consider a round metric $g$ and its conformally equivalent metrics $wg$ where $w$ is the conformal factor. Throughout the paper, the metric is normalized so that the volume $\V(\RP^n, wg) \equiv \V_n(w)$ is equal to the volume $\V(\RP^n, g)$ of the round projective space. The Laplace–Beltrami operator $-\Delta_{wg}$ on $(\RP^n, wg)$ has a discrete sequence of eigenvalues:
$$
0 = \lambda_0(w) < \lambda_1(w) \leq \lambda_2(w)  \leq \dots \to \infty.
$$

This paper provides an upper bound for the second nonzero eigenvalue $\lambda_2(w)$ for all $\RP^n$, $n \geq 2$. The strategy of the proof is to find $n+1$ trial functions, which we construct using Veronese embeddings to map projective spaces to higher dimensional spheres followed by a fold and a M\"{o}bius transformation. We rely on topological arguments to show that the trial functions are orthogonal to the first two modes.

\subsection{\bf Results on the second eigenvalue}  
\label{sec:results}	

In this section, we review isoperimetric inequalities of the Laplacian eigenvalues related to spheres and projective spaces, and state our main result.

\begin{theorem}[Li and Yau \cite{LY82} for $n=2$ and El Soufi and Ilias \cite{EI86} for $n \geq 2$; first eigenvalue on $\RP^n$] 
Assume that the metric $wg$ is normalized so that $\V_n(w)=\V_n(1)$. Then the first nonzero eigenvalue of $-\Delta_{wg}$ satisfies,
$$
\lambda_1(w) \leq \lambda_1(1) = 2n+2.
$$
\end{theorem}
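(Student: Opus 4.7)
My plan is to follow the Hersch--Li--Yau--El Soufi--Ilias strategy: use an eigenmap $\phi \colon \RP^n \to S^N$ to produce trial functions, apply a M\"obius adjustment so each is orthogonal to the constants in $L^2(dV_{wg})$, and sum Rayleigh quotients. I first assemble the Veronese map $\phi = (\phi_1,\dots,\phi_{N+1}) \colon (\RP^n, g) \to S^N \subset \R^{N+1}$ with $N+1 = n(n+3)/2$, built from an orthonormal basis of the $\lambda_1(1)$-eigenspace of $-\Delta_g$ (the even quadratic spherical harmonics on $S^n$, which descend to $\RP^n$). The map $\phi$ is a conformal minimal immersion satisfying $\phi^* g_{S^N} = \frac{\lambda_1(1)}{n} g$, $\sum_i \phi_i^2 \equiv 1$, and $-\Delta_g \phi_i = \lambda_1(1)\phi_i$ with $\lambda_1(1) = 2n+2$.

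Next, Hersch's lemma applied to $\operatorname{Conf}(S^N)$ through the conformal ball model supplies a $T \in \operatorname{Conf}(S^N)$ such that $\psi := T \circ \phi$ satisfies the barycenter condition $\int_{\RP^n} \psi_i \, dV_{wg} = 0$ for every $i$. Each $\psi_i$ is then admissible in the variational characterization of $\lambda_1(w)$; summing the Rayleigh estimates and collapsing the denominator via $\sum_i \psi_i^2 \equiv 1$ (preserved because $T(S^N) = S^N$) yields
\[
\lambda_1(w)\,\V_n(w) \;\leq\; \int_{\RP^n} \sum_i |\nabla_{wg}\psi_i|^2 \, dV_{wg}.
\]

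For the right-hand side, a pointwise computation using the conformality of $\phi$ gives $\sum_i |\nabla_g \psi_i|^2 = \lambda_1(1)\,(\mu\circ\phi)^2$, where $\mu$ is the conformal factor of $T$ on $S^N$. Converting to the metric $wg$ via the scaling $|\nabla f|_{wg}^2 \, dV_{wg} = w^{n/2-1} |\nabla f|_g^2 \, dV_g$, then applying H\"older's inequality with exponents $(n/2, n/(n-2))$ and the normalization $\V_n(w) = \V_n(1)$, the right-hand side is bounded by
\[
\lambda_1(1)\,\V_n(1)^{(n-2)/n} \left(\int_{\RP^n} (\mu\circ\phi)^n \, dV_g\right)^{2/n}.
\]
The last integral is (up to a scaling constant) the volume of $\RP^n$ with respect to $(T\circ\phi)^* g_{S^N}$, which by the extremal conformal-volume property of the Veronese embedding is at most $(\lambda_1(1)/n)^{n/2}\,\V_n(1)$. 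Combining these pieces gives $\lambda_1(w) \le \lambda_1(1) = 2n+2$.

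The main obstacle is this last energy bound. In dimension $n = 2$ the Dirichlet pairing $|\nabla f|^2 \, dV$ is conformally invariant, so the computation collapses and the bound is immediate. For $n \geq 3$ one must invoke the conformal-volume theorem of El Soufi--Ilias, and specifically verify that the Veronese embedding of the round $\RP^n$ realizes the extremal $n$-conformal volume. The Hersch $T$ chosen for the barycenter is automatically compatible with this bound, since the conformal-volume inequality is a supremum over all of $\operatorname{Conf}(S^N)$; the real technical content lies in identifying that supremum explicitly for the Veronese map, which is where the equality case $w \equiv 1$ is forced.
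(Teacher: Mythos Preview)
The paper does not give its own proof of this statement; Theorem~1 is stated as a known result attributed to Li--Yau \cite{LY82} and El~Soufi--Ilias \cite{EI86}, and the surrounding discussion only sketches the history (``since $\RP^n$ can be minimally immersed to a higher dimensional sphere by its first eigenfunctions, the standard metric induces the sharp upper bound''). So there is no proof in the paper to compare against directly.

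That said, your proposal is correct and is exactly the El~Soufi--Ilias argument. Moreover, it is the $\lambda_1$ specialization of the machinery the paper builds for $\lambda_2$ in Sections~\ref{sec:veronese}--\ref{sec:rayleigh}: if one drops the fold map $F_H$ from the trial functions \autoref{trial} (so that only orthogonality to constants is required, which Hersch's center-of-mass lemma handles alone), then the paper's Section~\ref{sec:rayleigh} computation reduces to precisely your chain of inequalities. In particular, your conformal factor $\lambda_1(1)/n$ is the paper's $a_n^2$, your H\"older step matches the one after \autoref{Rayleigh2}, and your final appeal to the conformal-volume property of the Veronese map is the same citation the paper uses at the end of Section~\ref{sec:rayleigh}, namely \cite[Corollary~2.3 and p.~267]{EI86}, which identifies the $(m-1)$-conformal volume of $\Phi$ as $a_n^n\,\V_n(1)$. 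The ``obstacle'' you flag---that for $n\geq 3$ one must know this conformal-volume value---is exactly the external input the paper also relies on and does not reprove.
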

The upper bound is attained when the metric is round on $\RP^n$. For the next higher eigenvalue on 2-dimensional projective space, we have the following result.

\begin{theorem}[Nadirashvili and Penskoi \cite{NA18}; second eigenvalue on $\RP^2$]\label{thmsharp}
Assume that the metric $wg$ is normalized so that $\V_2(w)=\V_2(1)$ on $\RP^2$. Then the second nonzero eigenvalue of $-\Delta_{wg}$ satisfies,
$$
\lambda_2(w) \leq 10.
$$
\end{theorem}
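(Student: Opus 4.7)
The plan is to apply the min-max characterization
\[
\lambda_2(w) \le \sup_{0 \neq f \in V} \frac{\int_{\RP^2} |\nabla f|_g^2 \, dV_g}{\int_{\RP^2} f^2 \, dV_{wg}},
\]
valid for any 3-dimensional subspace $V \subset H^1(\RP^2)$ which is $L^2(wg)$-orthogonal to the first two eigenfunctions $\phi_0 \equiv \mathrm{const}$ and $\phi_1$ of $-\Delta_{wg}$; the reformulation of the numerator uses the conformal invariance of the Dirichlet energy in dimension two. The goal is to construct such a $V$ whose Rayleigh quotients are all bounded by $10$.

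For the trial functions I would take the Veronese embedding $\Psi:\RP^2 \to S^4 \subset \R^5$ (the isometric, conformal minimal embedding via degree-two spherical harmonics) and post-compose with a conformal automorphism $\gamma_p$ of $S^4$ parametrized by $p \in B^5$, the hyperbolic-ball model of $\mathrm{Conf}(S^4)/\mathrm{SO}(5)$. The five component functions $F^p_i := (\gamma_p \circ \Psi)_i$ satisfy $\sum_i (F^p_i)^2 \equiv 1$, and their total Dirichlet energy equals twice the area of the spherical image $\gamma_p(\Psi(\RP^2))$. This is the analogue of the Hersch family adapted to $\RP^2$ via the \emph{fold} implicit in the Veronese map.

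The heart of the argument---and the step the paper advertises as a single topological one---is to enforce orthogonality to both $\phi_0$ and $\phi_1$ simultaneously, rather than iterating Hersch's trick. I would consider the parameter space $\overline{B^5} \times \mathrm{Gr}(3,5)$ (dimensions $5+6=11$) together with the map
\[
\Phi(p,W) = \Bigl(\textstyle\int_{\RP^2} F^p \, dV_{wg}, \; \bigl(\int_{\RP^2} (P_W F^p)_j \, \phi_1 \, dV_{wg}\bigr)_{j=1}^3\Bigr) \in \R^5 \oplus \R^3,
\]
where $P_W$ projects $\R^5$ onto $W$ in a fixed orthonormal frame. As $p \to \partial B^5 = S^4$ the M\"obius map $\gamma_p$ concentrates to a Dirac mass at $p/|p|$, so $\int F^p \, dV_{wg} \to 2\pi \cdot p/|p|$, giving the first five components of $\Phi$ a nontrivial boundary degree. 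Combined with a Borsuk--Ulam argument on the Grassmannian factor (exploiting the $\mathbb{Z}_2$-action $W \leftrightarrow W^\perp$ on $\mathrm{Gr}(3,5)\cong\mathrm{Gr}(2,5)$), this should force a zero $(p^*,W^*)$ of $\Phi$, delivering the trial subspace $V := \mathrm{span}\{(P_{W^*}F^{p^*})_j : j=1,2,3\}$ satisfying both orthogonality conditions in one step.

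With $V$ in hand, the Rayleigh quotients are estimated using $\sum_i (F^{p^*}_i)^2 \equiv 1$ (so the denominators sum to at most $\V_2(w) = 2\pi$) together with the area bound $\int \sum_i |\nabla F^{p^*}_i|^2 \, dV_g = 2\,\mathrm{Area}(\gamma_{p^*}\circ\Psi)$, which in turn is controlled by the Veronese area and the M\"obius distortion. The sharp constant $10$ should emerge as $p^*$ approaches a critical boundary configuration corresponding to the degenerate splitting into a round $\RP^2$ of volume $\tfrac{6\pi}{5}$ and a round $S^2$ of volume $\tfrac{4\pi}{5}$, on which the two first nonzero eigenvalues equalize at exactly $10$. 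The main obstacle is twofold: first, making the Borsuk--Ulam/degree calculation rigorous on the compactification $\overline{B^5}\times\mathrm{Gr}(3,5)$ with the correct equivariant setup; second, upgrading the easy bound on the \emph{sum} of three Rayleigh quotients to the bound on the \emph{maximum} needed for the min-max, without losing the factor that separates $10$ from the weaker $\lambda_1$-style bound of $6$.
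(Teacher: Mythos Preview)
The paper does not prove Theorem~\ref{thmsharp}; it is quoted from Nadirashvili--Penskoi~\cite{NA18}, and the paper explicitly notes (just after Theorem~\ref{thm}) that its own trial-function method yields only $\lambda_2(w)<12$ in dimension~$2$. So there is no proof in the paper to compare against, and the sharp bound $10$ lies outside the scope of the technique developed here.

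Your outline has a structural gap beyond the two obstacles you flag. You omit the fold map $F_H$ on the target sphere ${\mathbb S}^4$, which is the essential ingredient for a $\lambda_2$ bound by this route (following Petrides~\cite{P14} and the present paper). The phrase ``fold implicit in the Veronese map'' conflates the Veronese double cover ${\mathbb S}^2\to\RP^2$ on the domain with the fold across a spherical cap on the \emph{target} ${\mathbb S}^4$; only the latter doubles the available Dirichlet energy. Without it, your five components $F^p_i$ have total energy $2\operatorname{Area}\!\big(\gamma_p(\Psi(\RP^2))\big)\le 12\pi$ while $\sum_i\int(F^p_i)^2\,dV_{wg}=2\pi$, and summing the Rayleigh inequalities gives only $\lambda_2\le 6$---the Li--Yau $\lambda_1$ bound. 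Projecting to a $3$-plane $W$ makes this worse, not better: the numerators still sum to at most $12\pi$, but the denominators now sum to $\int|P_W F^p|^2\,dV_{wg}<2\pi$ with no lower control. (Incidentally, the ``sum vs.\ max'' issue you raise is not the real obstacle; the standard summing trick in Section~\ref{sec:rayleigh} handles that.) Even with the fold reinstated, the paper's own calculation tops out at $12$; reaching $10$ requires the bubbling analysis of~\cite{NA18} or Karpukhin's argument in~\cite{K19}, neither of which reduces to a finite-dimensional trial family plus a degree computation.
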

The upper bound can be obtained by a sequence of metrics approaching to that of the disjoint union of a round projective space and a sphere having ratio of radii $\sqrt{6} : \sqrt{2}$. A natural conjecture is that the upper bound can be extended to all dimensions. 
\begin{conjecture}[Second eigenvalue on $\RP^n$, $n \geq 3$] \label{conj}
Assume that the metric $wg$ is normalized so that $\V_n(w)=\V_n(1)$ on $\RP^n$. Then the second nonzero eigenvalue of $-\Delta_{wg}$ satisfies,
\begin{equation}
\lambda_2(w) < ((2n+2)^{n/2}+2n^{n/2})^{2/n}.
\label{mainequ}
\end{equation}
\end{conjecture}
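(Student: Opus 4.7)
The plan is to apply the variational characterization of $\lambda_2(w)$: exhibit a three-dimensional subspace $E\subset H^1(\RP^n,wg)$ that is $L^2(wg)$-orthogonal to the constants and to the first eigenspace of $-\Delta_{wg}$, and on which the Rayleigh quotient is at most the right-hand side of \eqref{mainequ}. The extremal metric in the conjecture is the degenerate limit of a round $\RP^n$ of volume $V_1$ glued to a round $\mathbb{S}^n$ of volume $V_2$, with $V_1+V_2=\V_n(1)$; balancing the two first eigenvalues $(2n+2)(\V_n(1)/V_1)^{2/n}$ and $n(2\V_n(1)/V_2)^{2/n}$ yields precisely $((2n+2)^{n/2}+2n^{n/2})^{2/n}$, so the trial functions should mimic this splitting while living genuinely on $\RP^n$.

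The trial functions I would build come in two families, supported on complementary regions $U$ and $\RP^n\setminus U$ for a small neighborhood $U$ of a chosen point $x_0\in\RP^n$. For the $\mathbb{S}^n$-family I lift to the double cover $\pi\colon\mathbb{S}^n\to\RP^n$, take the coordinate functions of a Möbius transformation of $\mathbb{S}^n$ concentrating near $\pi^{-1}(x_0)$, and fold under the deck involution so that they descend to $n+1$ functions on $\RP^n$ essentially supported in $U$; conformal bookkeeping controls their Rayleigh quotient by $n(2\V_n(1)/V_2)^{2/n}$. For the $\RP^n$-family I use the Veronese embedding $\mathcal{V}\colon\RP^n\hookrightarrow\mathbb{S}^N$ (whose coordinates realize the round first eigenfunctions with eigenvalue $2n+2$), compose with a Möbius map of $\mathbb{S}^N$ that concentrates away from $U$, and extract trial functions with Rayleigh quotient at most $(2n+2)(\V_n(1)/V_1)^{2/n}$. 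Because the two families have disjoint essential supports, any function in their joint span has Rayleigh quotient bounded by the larger of the two estimates, and the balancing condition $V_1+V_2=\V_n(1)$ yields the conjectured constant.

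The remaining step, and the author's announced novelty, is to choose the Möbius parameters and the concentration point so that a three-dimensional subspace of the total span is simultaneously $L^2(wg)$-orthogonal to the constants and to the first eigenspace of $-\Delta_{wg}$. I would package both orthogonality conditions into a single continuous map $F$ from the closed product of the two Möbius parameter balls into an auxiliary Euclidean space whose dimension matches the total number of constraints. As the Möbius parameters approach the boundary of their ball the trial functions concentrate to point masses in a controlled way, and the fold endows $F$ with an equivariance under the antipodal involution of the sphere; a degree or Borsuk--Ulam argument then forces a zero of $F$ in the interior, providing the desired parameters.

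I expect the decisive obstacle to be precisely this topological argument, because the first eigenspace of $-\Delta_{wg}$ has \emph{a priori} unknown dimension and position, so the degree/equivariance scheme must be robust enough to produce a zero for every admissible eigenspace. A secondary difficulty, which I suspect is what distinguishes a proof of the full conjecture from the asymptotically sharp theorem the author announces, is retaining the \emph{sharp} Kröger-type constants $(2n+2)^{n/2}$ and $2n^{n/2}$ in the Rayleigh bookkeeping: the concentration estimates must partition volume and Dirichlet energy exactly between the two regions, with no loss to lower-order terms or dimension-dependent prefactors.
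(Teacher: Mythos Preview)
The statement you are attempting is labeled a \emph{Conjecture} in the paper and is explicitly left open; the author proves only the weaker \autoref{thm} with bound $2^{2/n}(2n+2)$, and the later section ``Remark about \autoref{conj}'' offers supporting evidence without closing the gap. So there is no proof in the paper to compare against, and your final paragraph correctly anticipates that the sharp-constant bookkeeping is exactly the unresolved obstacle.

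Your two-family construction (sphere-like bump near a point, Veronese-like functions away from it) also differs from the paper's route to the weaker theorem, which uses a \emph{single} family: the $m(n)$ components of $T_{-c}\circ F_H\circ\Phi$, where $\Phi$ is the Veronese embedding into $\mathbb{S}^{m(n)-1}$, $F_H$ a fold across a spherical cap, and $T_{-c}$ a M\"obius map. The fold already manufactures the two pieces, and orthogonality to the constant and to one first eigenfunction is obtained by a degree argument on $\overline{\B}^{\,m}\times\mathbb{S}^{m-1}\times[0,1]$ via the reflection symmetry $(R_b\times R_b)\phi(a,b)=\phi(R_b a,-b)$ of \autoref{topdeg}, not an antipodal equivariance on a product of two M\"obius balls as you sketch. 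Sharpness is lost precisely at \autoref{volumedivide}, where the two folded volumes are each bounded by the full conformal volume of $\Phi$ rather than jointly by the conjectured sum $(2n+2)^{n/2}+2n^{n/2}$. Your scheme faces the analogous loss: the claim that ``any function in their joint span has Rayleigh quotient bounded by the larger of the two estimates'' needs genuinely disjoint supports, whereas in every non-degenerate configuration the two families overlap and the cross terms must be controlled quantitatively. The paper's two degeneration propositions (fold $t\to 1$ and M\"obius $|c|\to 1$) compute the limiting volumes in the extreme regimes separately but do not establish the joint maximum over all parameters, which is what both your outline and the conjecture require.
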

The upper bound can be obtained by a sequence of metrics approaching to that of disjoint union of a round projective space and a round sphere having ratio of radii $\sqrt{2n+2} : \sqrt{n}$.

We prove the following bound on $\lambda_2(w)$, with a right hand side that is larger than in the conjecture.
\begin{theorem}[Second eigenvalue on $\RP^n$, $n \geq 3$] \label{thm}
Assume that the metric $wg$ is normalized so that $\V_n(w)=\V_n(1)$ on $\RP^n$. Then the second nonzero eigenvalue of $-\Delta_{wg}$ satisfies,
\begin{equation}
\lambda_2(w) <2^{2/n}(2n+2).
\label{mainequ2}
\end{equation}
\end{theorem}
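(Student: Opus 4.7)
My plan is to apply the variational characterization of $\lambda_2(w)$ using $n+1$ trial functions built from the Veronese embedding of $\RP^n$, a folding map, and a M\"obius transformation, plus a single topological step that enforces both required orthogonality conditions simultaneously. For the trial functions, I will use that the quadratic monomials $x_i^2$ on $S^n\subset\R^{n+1}$ descend to smooth functions on $\RP^n=S^n/\{\pm1\}$ and, after centering, are $\lambda_1$-eigenfunctions of the round Laplacian with eigenvalue $2n+2$. These $n+1$ functions form the ``diagonal'' part of the Veronese embedding $V:\RP^n\hookrightarrow S^N$, $N=\tfrac{n(n+3)}{2}$, obtained by folding the full embedding onto the diagonal coordinate subspace. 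Post-composing with the classical family of M\"obius conformal automorphisms $\phi_\bu:S^n\to S^n$ parameterized by $\bu\in\B^{n+1}$ then produces a continuous family of candidate trial functions $f_0^\bu,\dots,f_n^\bu$ on $\RP^n$.

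For the orthogonality step, I need (enough of) the $f_i^\bu$ to be $L^2_{wg}$-orthogonal both to constants $u_0=1$ and to a first eigenfunction $u_1$ of $-\Delta_{wg}$. Rather than using the classical two-step Hersch procedure, I will package both conditions into a single continuous map $G:\CD^{n+1}\to\R^{n+1}$ whose coordinates are the $wg$-weighted integrals of the $f_i^\bu$ against a suitable basis of $\mathrm{span}\{u_0,u_1\}$, exploiting the Veronese identity $\sum_i x_i^2\equiv 1$ to absorb one redundant degree of freedom. A degree-theoretic argument (Brouwer, or Borsuk--Ulam after an odd extension) applied to $G$ --- using that as $|\bu|\to 1$ the map $\phi_\bu$ degenerates to evaluation at a point, so $G$ has a controlled boundary map of nonzero degree --- produces $\bu^*\in\B^{n+1}$ with $G(\bu^*)=0$. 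At this parameter, the $f_i = f_i^{\bu^*}$ are simultaneously $L^2_{wg}$-orthogonal to $u_0$ and $u_1$, giving an admissible test space of dimension at least two in the appropriate orthogonal complement.

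To bound the Rayleigh quotient, I will invoke the minimal-embedding identity for the Veronese map, namely $\sum_i |\nabla_g F_i|^2 = (2n+2)\sum_i F_i^2$ on $(\RP^n,g)$ with $F_i = x_i^2 - 1/(n+1)$; this identity is preserved by $\phi_\bu$ up to a conformal distortion factor. Converting Dirichlet integrals from $g$ to the physical metric $wg$ introduces $w^{(n-2)/2}$ (trivial only when $n=2$), which I estimate by H\"older's inequality using the volume normalization $\V_n(w)=\V_n(1)$; the factor of $2$ inside the exponent comes from the $2$-to-$1$ covering $S^n\to\RP^n$ used in the construction. Summing over $i$ yields
\begin{equation*}
\sum_{i=0}^n \int_{\RP^n} |\nabla_{wg} f_i|^2\,dV_{wg} \;\leq\; 2^{2/n}(2n+2)\sum_{i=0}^n \int_{\RP^n} f_i^2\,dV_{wg},
\end{equation*}
so at least one $f_i$ satisfies $R_{wg}(f_i)\leq 2^{2/n}(2n+2)$, and strict inequality follows because equality would force the degenerate extremal metric of \autoref{conj}, outside the smooth class.

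The hardest step will be the topological argument in the second paragraph: classical Hersch reasoning handles only orthogonality to constants, so the novelty is packaging an eigenfunction orthogonality (with respect to a $u_1$ that depends implicitly on the unknown metric $wg$) into a single degree-theoretic step whose target dimension matches the parameter dimension $n+1$. A secondary challenge is the higher-dimensional conformal distortion, which must be absorbed via H\"older and ultimately produces precisely the $2^{2/n}$ correction factor.
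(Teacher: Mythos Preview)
Your construction has a genuine structural gap. The paper does \emph{not} project the Veronese image onto the $(n+1)$-dimensional ``diagonal'' subspace; it works with \emph{all} $m(n)=\tfrac{n(n+3)}{2}$ coordinate functions of $\Phi_n:\RP^n\to S^{m(n)-1}$ and applies the fold $F_H$ and M\"obius map $T_{-c}$ on the target sphere $S^{m(n)-1}$. This is essential, because $\Phi_n$, $F_H$ and $T_{-c}$ are each conformal, so the $n$-energy $\int|\nabla(\cdot)|^n$ is conformally invariant and can be computed on the round $\RP^n$ and then on the image submanifold. Your ``fold onto the diagonal coordinate subspace'' is a linear projection, which is not conformal, so the identity $\sum|\nabla F_i|^2=(2n+2)\sum F_i^2$ does not survive in any usable form, and the Rayleigh estimate collapses. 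Relatedly, if you instead try to apply a M\"obius map $\phi_\bu$ of $S^n$ \emph{before} taking squares, the functions $(\phi_\bu(x))_i^2$ are not even well-defined on $\RP^n$, since $\phi_\bu$ does not commute with the antipodal map.

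The topological step also fails dimensionally. You need each trial function orthogonal to both $u_0$ and $u_1$; with $n+1$ trial functions and only one scalar relation from $\sum x_i^2=1$, that is roughly $2n$ independent conditions, not $n+1$, so a map $G:\overline{\B}^{\,n+1}\to\R^{n+1}$ cannot encode them. In the paper the parameter space is $\B^{m(n)}\times S^{m(n)-1}\times[0,1)$, of dimension $2m(n)$, matching the $2m(n)$ orthogonality conditions; the degree argument (\autoref{topdeg}) lives on $S^{2m(n)-1}$, not $S^{n}$. Finally, the factor $2$ in $2^{2/n}$ does not come from the double cover $S^n\to\RP^n$; it arises because the fold $F_H$ splits the integral into two pieces (inside and outside the cap $H$), and each piece is bounded by the conformal volume $(a_n)^n\V_n(1)=((2n+2)/n)^{n/2}\V_n(1)$, giving $\lambda_2^{n/2}\V_n(1)<2(2n+2)^{n/2}\V_n(1)$.
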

In dimension 2, the above theorem gives $\lambda_2(w) <12$, which is weaker than the sharp bound 10 in \autoref{thmsharp}. In \autoref{ratio}, we confirm that \autoref{conj} is stronger than \autoref{thm}, and that the theorem is asymptotically sharp as $n$ tends to $\infty$ because the ratio between the two upper bounds approaches 1. 

To conclude this introduction, we summarize some related literature.

\subsection*{Higher eigenvalues on the 2-dimensional real projective space $\RP^2$} 
For the higher eigenvalues on $\RP^2$, it is known by Karpukhin \cite{K19} that $k$-th eigenvalue has a sharp upper bound when a sequence of metrics converges to the disjoint union of a real projective space and $k-1$ identical round spheres, where the ratio of radii between the projective space and the spheres is $\sqrt{6} : \sqrt{2}$.

\subsection*{First and second eigenvalue on $\mathbb{S}^n$}
For the first eigenvalue $\lambda_{1}$, Hersch  in 2-dimensions \cite{H70} and El Soufi and Ilias \cite{EI86} in higher dimensions proved that the sharp upper bound of the first nonzero eigenvalue $\lambda_1$ is attained by the round metric (within that conformal class). They found trial functions orthogonal to the constant by composing the eigenfunctions of the round sphere (which are the $m+1$ coordinate functions) with a M\"{o}bius transformation to move the center of the mass to the origin. 

For maximizing $\lambda_2$ on the 2-sphere, Nadirashvili \cite{N02} and later Petrides \cite{P14} showed that there exists a maximizing sequence of metrics degenerating to a disjoint union of two equal round spheres. In my work \cite{K20}, the analogous result for the second eigenvalue $\lambda_{2}$ on the higher-dimensional sphere $\mathbb{S}^n$ is proved. Inspired by the prior works on constructing trial functions, the method in this paper relies on building trial functions satisfying the orthogonality conditions by composing the eigenfunctions of the round sphere with a fold map and a M\"{o}bius transformation.

\subsection*{Higher eigenvalues on 2-sphere}
Nadirashvili and Sire \cite{NS17} confirmed the analogous result for the third nonzero eigenvalue $\lambda_3$. For all eigenvalues, by using the work of Petrides \cite{P17}, Karpukhin, Nadirashvili, Penskoi and Polterovich \cite[Theorem 1.2]{KNPP19} showed that the $k$-th eigenvalue on ${\mathbb S}^{2}$ is maximal when a sequence of metrics degenerates to the disjoint union of $k$ identical round spheres.
%
%

\section{\bf Overview of the proof}  \label{sec:overview}	

Let us preview the trial functions, which are the components in $\R^{m(n)}$ of the vector valued map:
\begin{align}
 (Y \circ T_{-c} \circ F_{H} \circ \Phi)(y) \label{vec}, \qquad y \in \RP^n,
\end{align}
where $T_{-c}$ is a M\"{o}bius transformation on the ball $\B^{m(n)}$, $ F_{H}$ is a fold map on ${\mathbb S}^{m(n)-1}$, $\Phi \equiv \Phi_n:  \RP^n \to {\mathbb S}^{m(n)-1} $ is a generalized Veronese map and $Y(y)=y$ is the identity map on the sphere. Note that each component $Y_{j}(y)=y_{j}$ is an eigenfunction for the round sphere. We drop the identity map ``$Y$" in the work that follows since its role in \autoref{vec} is mainly to emphasize that the trial functions are the $m+1$ components of the vector field in \autoref{vec}.

In this paper, we generalize Veronese embedding to all dimensions by induction, building on the work of Zhang \cite{Z09}. We adapt the conformal factors so that the images of Veronese embeddings lie on higher dimensional spheres of radius 1. The construction of trial functions relies on my earlier work \cite{K20}. We build $m(n)$-number of trial functions that satisfy the conditions for the variational characterization of the second nonzero eigenvalue: orthogonality to the constant and to the first excited state on $\RP^n$ with the metric $wg$.


%
%


Let us mention again some relavant previous works. For $\RP^2$,  Li and Yau \cite{LY82} used the well-known Veronese embedding, which is a minimal immersion of $\RP^2$ to ${\mathbb S}^4$, to show that the round metric maximizes the first eigenvalue. El Soufi and Ilias \cite{EI86} generalized the result to $\RP^n$ for all $n \geq 2$, showing that since $\RP^n$ can be minimally immersed to a higher dimensional sphere by its first eigenfunctions, the standard metric induces the sharp upper bound.  We combine this idea of mapping real projective space to a higher dimensional sphere with the method of constructing trial functions from my previous paper \cite{K20}.

\section{\bf Veronese embedding of projective space into a sphere } \label{sec:veronese}

Our goal in this section is to construct a “generalized Veronese” map $\Phi_{n}: \RP^{n} \to {\mathbb S}^{m(n)-1}$ that is a conformal embedding. We first define $\Phi_{n}: \R^{n+1} \to \R^{m(n)}$, and show that when the map is restricted on the real projective space, the image lies in a higher dimensional sphere. 

We denote $m(n)$ as the multiplicity of the first eigenvalue of the round $\RP^n$,  
$$m(n) = \frac{n(n+3)}{2}$$
for all $n \geq 1$ (see \cite[Corollary 7.4.3]{T10}). And note that the first eigenvalue of the round $\RP^n$ is $ 2n+2$. Readers may wish to skip this section since the explicit formulas are not needed and we only need the conformal embedding property in \autoref{phiconformal}.

\begin{defn}
For $n=1$, define $\Phi_{1}: \R^{2} \to \R^{2}$ by
$$
\Phi_{1}(x_1,x_2) = \left(2x_1 x_2, x_1^2 -x_2^2 \right).
$$
For $n=2$, define $\Phi_{2}: \R^{3} \to \R^{5}$ by
$$
\Phi_{2}(x_1,x_2,x_3) = \sqrt{3} \left(x_1 x_2, \dfrac{1}{2}(x_1^2 -x_2^2) , x_1 x_3, x_2 x_3,  \dfrac{1}{2 \sqrt{3}}(x_1^2 +x_2^2 - 2 x_3^2) \right).
$$
For $n \geq 2$, define $\Phi_{n}: \R^{n+1} \to \R^{m(n)}$ inductively by
\begin{equation}
\label{veronese}
\begin{split}
\Phi_{n}(x_1, \cdots ,x_{n+1}) = &a_n \left( \frac{1}{a_{n-1}}\Phi_{n-1}(x_1, \dots ,x_{n}), x_1x_{n+1}, \dots \right. \dots, x_{n} x_{n+1}, 
\\&  \left. \frac{1}{na_n}\left(x_1^2 + x_2^2 +\dots +x_{n}^2 -nx_{n+1}^2 \right) \right),
\end{split}
\end{equation} 
where the constant is 
$$a_n = \left( \frac{2n+2}{n} \right)^{\! \! 1/2}.$$ 
\end{defn}

As a remark, when $n=1$, one can see that the image consists of coordinates of the square of the complex number $z = x_1 +i x_2$. Note that when $n=2$, the map $\Phi_2$ is the well-known Veronese conformal embedding. The inductive definition of $\Phi_n$ for $n \geq 2$ involves a normalized  $\Phi_{n-1}$, the cross terms $x_ix_{n+1}$ and then the final term. Notice that $\Phi_n(-x) = \Phi_n(x)$ and so $\Phi_n$ is well-defined on $\RP^n$.

\begin{proposition}
\label{phiconformal}
If the domain of $\Phi_{n}$ in \autoref{veronese} is restricted from $\R^{n+1}$ to the projective space $\RP^n$, then
$$ \Phi_{n} : \RP^{n} \to {\mathbb S}^{m(n)-1}$$
 is a conformal embedding with the conformal factor as $a_n$.
\end{proposition}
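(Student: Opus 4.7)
The plan is to prove the proposition by induction on $n$, establishing three facts in parallel: (a) $|\Phi_n(x)|^2 = |x|^4$ on all of $\R^{n+1}$, so that $\Phi_n({\mathbb S}^n) \subset {\mathbb S}^{m(n)-1}$; (b) $|d\Phi_n(x)[v]|^2 = a_n^2\,|v|^2$ for every $x \in {\mathbb S}^n$ and $v \in T_x {\mathbb S}^n$, which gives conformality with factor $a_n$; and (c) $\Phi_n$ descends to an injection on $\RP^n$. The base case $n=1$ is immediate: $|\Phi_1|^2 = (x_1^2 + x_2^2)^2$ and the Jacobian of $\Phi_1$ on ${\mathbb S}^1$ equals $2I$, matching $a_1 = 2$. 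Throughout the induction I will use that $\Phi_n$ is a homogeneous quadratic polynomial map, so Euler's identity $d\Phi_n(x)[x] = 2\Phi_n(x)$ and the scaling $d\Phi_n(tx)[v] = t\,d\Phi_n(x)[v]$ are available.

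For step (a), write $r^2 = x_1^2 + \cdots + x_n^2$ and $s = x_{n+1}$. The recursion \eqref{veronese} expresses $|\Phi_n|^2$ as a sum of three blocks; the inductive hypothesis reduces the first block to $r^4 a_n^2/a_{n-1}^2$, and the ratio $a_n^2/a_{n-1}^2 = (n^2-1)/n^2$ together with $a_n^2 = (2n+2)/n$ produces just the right cancellation to collect the remaining blocks into $(r^2 + s^2)^2 = |x|^4$.

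Step (b) is the crux. Decompose $v = (w, v_{n+1})$ with $w \in \R^n$, and when $r \neq 0$ split $w = \alpha y + w_\perp$ where $y = (x_1,\ldots,x_n)/r$, $\alpha = w \cdot y$, and $w_\perp \in T_y {\mathbb S}^{n-1}$. The inductive block expands via the scaling rule, Euler's identity, and the orthogonality $\Phi_{n-1}(y) \perp d\Phi_{n-1}(y)[w_\perp]$ (obtained by differentiating $|\Phi_{n-1}|^2 \equiv 1$ on ${\mathbb S}^{n-1}$); the cross-term block and the scalar last-entry block expand directly. The tangency condition $v \cdot x = 0$ reads $\sum_i x_i v_i = -s v_{n+1}$, i.e.\ $r\alpha = -s v_{n+1}$. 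Substituting this into all three blocks, the $s^2 v_{n+1}^2$ coefficients telescope (crucially using $a_n^2 = (2n+2)/n$) to give the clean total $a_n^2(|w|^2 + v_{n+1}^2) = a_n^2 |v|^2$. The degenerate case $r = 0$ forces $v_{n+1} = 0$ by tangency, and is checked directly using $d\Phi_{n-1}(0) = 0$.

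Step (c) is easier: $\Phi_n(-x) = \Phi_n(x)$ by construction, so I only need $\Phi_n(x) = \Phi_n(x')$ with $|x|=|x'|=1$ to force $x' = \pm x$. The last coordinate gives $x_{n+1}^2 = (x'_{n+1})^2$; when $x_{n+1} \neq 0$ the cross entries $x_i x_{n+1} = x'_i x'_{n+1}$ pin down the signs, and when $x_{n+1} = 0$ the first block reduces matters to the inductive hypothesis on $\RP^{n-1}$. The main obstacle is (b): the cancellation producing exactly $a_n^2 |v|^2$ depends sensitively on the value $a_n^2 = (2n+2)/n$, and one must carefully route the tangency condition through all three blocks so that the $s^2 v_{n+1}^2$ contributions combine correctly rather than leaving residual mixed terms.
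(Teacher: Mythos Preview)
Your proof is correct, and the overall architecture (induction establishing norm identity, conformality, and injectivity) matches the paper's. The difference lies in how you handle conformality. The paper proves the stronger matrix identity
\[
(D\Phi_n(\mathbf{x}))^T D\Phi_n(\mathbf{x}) = a_n^2 |\mathbf{x}|^2 I + \tfrac{2(n-1)}{n}\,\mathbf{x}\mathbf{x}^T
\]
on all of $\R^{n+1}$ via a block-matrix induction, and then deduces conformality on $\RP^n$ by observing that the rank-one term $\mathbf{x}\mathbf{x}^T$ annihilates tangent vectors. You instead compute $|d\Phi_n(x)[v]|^2$ directly for tangent $v$, splitting $w$ into radial and tangential parts relative to ${\mathbb S}^{n-1}$ and invoking Euler's identity plus the orthogonality $\Phi_{n-1}(y)\perp d\Phi_{n-1}(y)[w_\perp]$ to control the radial piece, with the tangency condition $r\alpha=-sv_{n+1}$ threaded through the algebra. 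The paper's route yields a cleaner intermediate statement (valid off the sphere) and a mechanical block-matrix verification; your route avoids matrix algebra entirely and is arguably more elementary, at the cost of a more delicate bookkeeping of the $s^2 v_{n+1}^2$ coefficients. Both rely on the same numerical miracle tied to $a_n^2 = (2n+2)/n$.
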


The proposition follows by combining the next two lemmas.

\begin{lemma}
The Veronese map $\Phi_{n}: \R^{n+1} \to \R^{m(n)}$ satisfies $|\Phi_{n}(\mathbf x) | =|\mathbf x|^2$. In particular, the image of the map $\Phi_{n}$ restricted to the projective space $\RP^{n}$ lies in the unit $(m(n)-1)$-sphere and the map $\Phi_{n}$ is injective on $\RP^{n}$.
\end{lemma}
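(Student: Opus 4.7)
The plan is to prove the norm identity $|\Phi_n(\mathbf{x})| = |\mathbf{x}|^2$ by induction on $n$ and then read off the two stated consequences. The base case $n=1$ is immediate:
\[
|\Phi_1(x_1,x_2)|^2 = 4x_1^2 x_2^2 + (x_1^2-x_2^2)^2 = (x_1^2+x_2^2)^2,
\]
which one may also recognise as $|z^2|=|z|^2$ under $z = x_1+ix_2$.

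For the inductive step, write $r^2 := x_1^2+\cdots+x_n^2$ and split $|\Phi_n(\mathbf{x})|^2$ according to the three blocks in the definition \autoref{veronese}: the rescaled $\Phi_{n-1}$ block, the $n$ cross terms $x_i x_{n+1}$, and the final linear combination of $r^2$ and $x_{n+1}^2$. Using the inductive hypothesis $|\Phi_{n-1}(x_1,\ldots,x_n)|^2 = r^4$, the three blocks contribute
\[
\frac{a_n^2}{a_{n-1}^2}\,r^4, \qquad a_n^2\,r^2 x_{n+1}^2, \qquad \frac{1}{n^2}\bigl(r^2 - n x_{n+1}^2\bigr)^2,
\]
respectively. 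Plugging in $a_n^2 = (2n+2)/n$ and $a_{n-1}^2 = 2n/(n-1)$ gives $a_n^2/a_{n-1}^2 = (n^2-1)/n^2$, so the $r^4$ coefficients telescope as $(n^2-1)/n^2 + 1/n^2 = 1$, the mixed coefficient is $a_n^2 - 2/n = (2n+2-2)/n = 2$, and the $x_{n+1}^4$ term contributes $1$. Summing yields $r^4 + 2 r^2 x_{n+1}^2 + x_{n+1}^4 = |\mathbf{x}|^4$, as required.

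The two consequences follow quickly. Restricting to $|\mathbf{x}|=1$ (the sphere double-covering $\RP^n$) gives $|\Phi_n(\mathbf{x})|=1$, so the image lies in $\mathbb{S}^{m(n)-1}$. For injectivity on $\RP^n$, since each component of $\Phi_n$ is a degree-two homogeneous polynomial we have $\Phi_n(-\mathbf{x})=\Phi_n(\mathbf{x})$, so the map is well defined on $\RP^n$; suppose $\Phi_n(\mathbf{x})=\Phi_n(\mathbf{y})$ with $|\mathbf{x}|=|\mathbf{y}|=1$. The last coordinate together with $r^2 = 1 - x_{n+1}^2$ forces $x_{n+1}^2 = y_{n+1}^2$; after replacing $\mathbf{y}$ by $-\mathbf{y}$ if necessary, we may assume $x_{n+1}=y_{n+1}$. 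If $x_{n+1}\neq 0$, the cross terms $x_i x_{n+1} = y_i x_{n+1}$ give $x_i=y_i$ for $i\leq n$, so $\mathbf{x}=\mathbf{y}$; if $x_{n+1}=0$, the equality of the $\Phi_{n-1}$ block and the inductive hypothesis (injectivity on $\RP^{n-1}$) yield $(y_1,\ldots,y_n)=\pm(x_1,\ldots,x_n)$, hence $\mathbf{y}=\pm\mathbf{x}$ in either case.

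There is no real obstacle here; the only thing to watch is the bookkeeping in the inductive step, namely that the chosen normalisation $a_n = \sqrt{(2n+2)/n}$ is precisely what makes the three blocks reassemble into the perfect square $(r^2+x_{n+1}^2)^2$. This is the arithmetic identity that motivated the definition of $a_n$ in the first place.
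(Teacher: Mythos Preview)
Your proof is correct and follows essentially the same inductive approach as the paper, both for the norm identity and for injectivity. Your writeup is in fact more explicit than the paper's: you track the three coefficient identities that make the blocks reassemble into $(r^2+x_{n+1}^2)^2$, whereas the paper simply says ``after replacing $x_1^2+\cdots+x_n^2$ with $|\mathbf{x}|^2 - x_{n+1}^2$ and simplifying''; and for injectivity the paper only says ``using a short induction argument'' while you actually carry out the case split on $x_{n+1}=0$ versus $x_{n+1}\neq 0$.
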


\begin{proof}
One can check that the result holds easily for $n=1$. Now, assume that the lemma holds for $n-1 \geq 1$. We show that it also holds for $n$.

We have $|\Phi_{n-1}(x_1, \dots, x_{n})| = |(x_1, \dots, x_{n})|^2$ by the induction hypothesis. The square of $|\Phi_{n}|$ is written as follows:
\begin{align*}
|\Phi_{n}(x_1, \dots, x_{n+1})|^2 &= \dfrac{2n+2}{n} \left( \dfrac{n-1}{\lambda_{n-1}}(x_1^2+ \dots +x_n^2)^2+ (x_1^2 + \dots + x_{n}^2)x_{n+1}^2 \right.
\\& \left.+\dfrac{1}{n(2n+2)}(x_1^2 +  \dots  \dots +x_{n}^2 -n x_{n+1}^2)^2 \right).
\end{align*}

After replacing $x_1^2 + \dots +x_n^2$ with $|\mathbf{x}|^2 -x_{n+1}^2$ and simplifying the above, one finds $|\Phi_n(x_1, \dots, x_{n+1})|^2 = |\mathbf{x}|^4$, where $\mathbf{x} = (x_1, \dots, x_{n+1})^T$ is a column vector.





One can easily see that the map is injective on $\RP^1$ since on the complex plane, the map can be written as $\Phi_1(z)=i \bar{z}^2$ which is injective on $\RP^1$. Using a short induction argument, we find $\Phi_{n}$ is injective on $\RP^{n}$.

\end{proof}
Next, we compute the properties of $D\Phi_n$ and show the Veronese map $\Phi_n$ is conformal on $\RP^n$.
\begin{lemma}
For $n \geq 1$, 
\begin{align*}
(D\Phi_{n}(\mathbf{x}))^{T}D\Phi_{n}(\mathbf{x}) = \frac{2(n+1)}{n}| \mathbf x |^2 I +\frac{2(n-1)}{n} \mathbf{x x}^T,  \qquad \mathbf{x} \in \R^{n+1}.
\end{align*}
In particular, $\Phi_{n}$ restricted to the projective space $\RP^{n}$ is a conformal map into an $(m(n)-1)$-sphere with conformal factor $a_n$.
\end{lemma}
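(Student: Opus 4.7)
The plan is to prove the formula
\[
(D\Phi_{n}(\mathbf{x}))^{T} D\Phi_{n}(\mathbf{x}) = \frac{2(n+1)}{n}|\mathbf{x}|^{2} I + \frac{2(n-1)}{n}\mathbf{x}\mathbf{x}^{T}
\]
by induction on $n$, exploiting the recursive block structure of the generalized Veronese map in \autoref{veronese}. For the base case $n=1$, a direct computation from the explicit formula $\Phi_{1}(x_{1},x_{2}) = (2x_{1}x_{2}, x_{1}^{2}-x_{2}^{2})$ gives the $2\times 2$ matrix $4|\mathbf{x}|^{2} I$, which agrees with the claim since $2(n-1)/n = 0$ when $n=1$.

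For the inductive step, I would write $D\Phi_{n}(\mathbf{x})$ as a block matrix whose rows correspond to the three groups of components of $\Phi_{n}$: the rescaled $\Phi_{n-1}(x_{1},\ldots,x_{n})$ block (which does not depend on $x_{n+1}$), the $n$ cross terms $a_{n}x_{i}x_{n+1}$ for $i=1,\ldots,n$, and the final entry $\frac{1}{n}(x_{1}^{2}+\cdots+x_{n}^{2} - nx_{n+1}^{2})$. The product $(D\Phi_{n})^{T} D\Phi_{n}$ is then the sum of the three pieces $M_{1} + M_{2} + M_{3}$ coming from these blocks. The inductive hypothesis applied to the $\Phi_{n-1}$ block (embedded into the upper-left $n\times n$ corner of an $(n{+}1)\times(n{+}1)$ matrix and rescaled by $a_{n}^{2}/a_{n-1}^{2}$) gives $M_{1}$ in closed form. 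The Jacobian of the cross-term block is the $n\times(n{+}1)$ matrix with rows $a_{n}(x_{n+1}e_{i}^{T} + x_{i}e_{n+1}^{T})$, which yields $M_{2}$ via a short direct calculation; similarly, $M_{3}$ is the outer product of a single explicit gradient vector.

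The main obstacle is then an algebraic simplification: one must check that summing the three contributions reproduces precisely the claimed right-hand side, with the off-diagonal $\mathbf{x}\mathbf{x}^{T}$ pieces coming from all three blocks combining cleanly and the diagonal $|\mathbf{x}|^{2} I$ coefficient sharpening to $2(n+1)/n$. The identity $x_{1}^{2} + \cdots + x_{n}^{2} = |\mathbf{x}|^{2} - x_{n+1}^{2}$ will be needed, as in the previous lemma, and the coefficient $a_{n}^{2} = (2n+2)/n$ is the numerical glue. I expect no conceptual difficulty beyond careful bookkeeping of the cross terms: the $\mathbf{x}\mathbf{x}^{T}$ coefficient $2(n-1)/n$ increases by $2/n$ from $n-1$ to $n$, and this increase should match exactly the off-diagonal contribution produced by the new cross terms and the new final coordinate.

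Finally, to deduce conformality of $\Phi_{n}|_{\RP^{n}}$, I would work on the unit sphere $S^{n}\subset \R^{n+1}$ (the double cover of $\RP^{n}$). For $\mathbf{x}\in S^{n}$ and any tangent vector $\mathbf{v}\in T_{\mathbf{x}}S^{n}$, orthogonality gives $\mathbf{v}^{T}\mathbf{x}\mathbf{x}^{T}\mathbf{v} = (\mathbf{x}\cdot\mathbf{v})^{2} = 0$, so the formula reduces to
\[
|D\Phi_{n}(\mathbf{x})\mathbf{v}|^{2} = \frac{2(n+1)}{n}|\mathbf{v}|^{2} = a_{n}^{2}|\mathbf{v}|^{2}.
\]
This is exactly the statement that $\Phi_{n}$ is conformal on $\RP^{n}$ with conformal factor $a_{n}$, and the preceding lemma shows the image lies in the unit $(m(n){-}1)$-sphere and is an embedding.
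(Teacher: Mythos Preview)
Your proposal is correct and follows essentially the same approach as the paper: induction on $n$ using the block structure of $D\Phi_{n}$ inherited from the recursive definition, applying the inductive hypothesis to the $\Phi_{n-1}$ block, checking the remaining algebraic coefficients, and then deducing conformality on $\RP^{n}$ by killing the $\mathbf{x}\mathbf{x}^{T}$ term with tangent vectors. The paper organizes the block computation as a $2\times 2$ block matrix (splitting columns into $(\mathbf{x}_{n},x_{n+1})$) rather than your sum $M_{1}+M_{2}+M_{3}$ over row-blocks, but this is purely a bookkeeping difference.
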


\begin{proof}
We prove by induction. For $n=1$, 
\begin{gather*}
D\Phi_{1}^{T}D\Phi_{1}
=4
\begin{pmatrix}
x_2 & x_1 \\
x_1 & -x_2
\end{pmatrix}
\begin{pmatrix}
x_2 & x_1 \\
x_1 & -x_2
\end{pmatrix} 
=4
\begin{pmatrix}
x_1^2 + x_2^2 & 0 \\
0 & x_1^2+ x_2^2
\end{pmatrix} 
= 4|\mathbf{x}|^2I,
\end{gather*}
where the conformal factor is $a_1=2$. Assume that the lemma holds for $n-1 \geq 1$. Recall that $a_n =\sqrt{2(n+1) /n}  $ and define $b_n = \sqrt{2(n-1)/n} $. By the induction hypothesis,
\begin{align*}
(D\Phi_{n-1}(\mathbf{x}_n))^TD\Phi_{n-1}(\mathbf{x}_n) = a_{n-1}^2 |\mathbf{x}_{n}|^2 I_{n} +b_{n-1}^2\mathbf{x}_{n}\mathbf{x}_{n}^T, 
\end{align*}
where $I_{n}$ is an $n \times n$ identity matrix and $\mathbf{x}_n =(x_1, \dots, x_{n})^T \in \R^{n}$. Let $\mathbf{x}= (\mathbf{x}_n, x_{n+1})^T \in \R^{n+1}$. The derivative of $\Phi_n$ is written as a $3 \times 2$ block matrix,

\begin{gather*}
D\Phi_{n}(\mathbf{x}) = a_n
\begin{pmatrix}
a_{n-1}^{-1}D\Phi_{n-1} & \mathbf{0}  \\
x_{n+1}I_{n}& \mathbf{x}_{n} \\
d_{n}\mathbf{x}_{n}^T & -b_{n+1} x_{n+1}
\end{pmatrix},
\end{gather*}
where $d_{n} = \sqrt{2/n(n+1)}$. We write $D\Phi_{n}^{T}D\Phi_{n}$ as a $2 \times 2$ block matrix,
\begin{gather}
D\Phi_{n}^{T}D\Phi_{n}=
a_n^2
\begin{pmatrix}
a_{n-1}^{-2}D\Phi_{n-1}^TD\Phi_{n-1} + x_{n+1}^2 I_n + d_n^2 \mathbf{x}_{n}\mathbf{x}_{n}^T & (1-b_{n+1}d_n) x_{n+1}\mathbf{x}_{n} \\[12pt]
(1-b_{n+1}d_n) x_{n+1}\mathbf{x}_{n}^T & \mathbf{x}_{n}^T \mathbf{x}_{n} +b_{n+1}^2 x_{n+1}^2
\end{pmatrix}.\label{block}
\end{gather}
Next, we use the induction hypothesis to expand $D\Phi_{n-1}^T D\Phi_{n-1}$ on the upper left term in \autoref{block}, having
\begin{align*}
a_{n-1}^{-2}D\Phi_{n-1}^TD\Phi_{n-1} + x_{n+1}^2 I_n + d_n^2 \mathbf{x}_{n}\mathbf{x}_{n}^T &=|\mathbf{x}_{n}|^2 I_{n} +  a_{n-1}^{-2}b_{n-1}^2\mathbf{x}_{n}\mathbf{x}_{n}^T + x_{n+1}^2 I_n + d_n^2 \mathbf{x}_{n}\mathbf{x}_{n}^T\\
&=|\mathbf{x}|^2  I_{n} +(a_{n-1}^{-2} b_{n-1}^2+ d_n^2 )\mathbf{x}_{n}\mathbf{x}_{n}^T.
\end{align*}
Hence we can write the matrix in \autoref{block} as follows, 
\begin{gather}
D\Phi_{n}^{T}D\Phi_{n} =
a_n^2 |\mathbf{x}|^2 I_{n+1} +
a_n^2
\begin{pmatrix}
 ( a_{n-1}^{-2} b_{n-1}^2+ d_n^2 )\mathbf{x}_{n}\mathbf{x}_{n}^T & (1-b_{n+1}d_n) x_{n+1}\mathbf{x}_{n} \\[12pt]
(1-b_{n+1}d_n) x_{n+1}\mathbf{x}_{n}^T & (b_{n+1}^2-1) x_{n+1}^2
\end{pmatrix} \label{matrix} .
\end{gather}

We only need to check all four coefficients of the second matrix in \autoref{matrix} are equal to $b_n^2/ a_n^2$, which is straightforward.  So, the conclusion of the Lemma holds for all $n$.

Finally, we show that the map is conformal on $\RP^n$. Let $\mathbf{x} \in \mathbb{S}^n$. For all $\bu, \bv \in \R^{n+1}$ that are tangential to the sphere at $\mathbf{x}$, 
\begin{align*}
\bu^{T}D\Phi_{n}^{T}D\Phi_{n} \bv &= \bu^{T}\left(\frac{2(n+1)}{n}| \mathbf x |^2 I +\frac{2(n-1)}{n} \mathbf{x x}^T \right)\bv 
\\&= a_n^2\bu^{T}\bv,
\end{align*}
where $|\mathbf{x}|^2=1$ and the term including $\bu^T\mathbf{x}$ and $\mathbf{x}^T\bv$ vanished since the vectors $\bu , \bv$ of the tangent space are orthogonal to the point $x$ on the sphere. This proves that the generalized Veronese map $\Phi_n$ on $\RP^n$ preserves angles and scales distances by the conformal factor $a_n$. 

\end{proof}

\section{\bf Trial Functions and Orthogonality} \label{sec:trial}

Relying on the techniques from Freitas and Laugesen \cite{FL20} for domains in hyperbolic space, I constructed the trial functions on the sphere in my previous paper \cite{K20}. In this paper, we adapt the method by first composing the Veronese embedding, since the embedding takes $\RP^n$ into ${\mathbb S}^{m(n)-1}$. Then we can apply a similar argument from the sphere: fold the image across some spherical cap and compose it with a suitable M\"{o}bius transformation. Now, we reintroduce the definitions of these maps to help the reader's understanding.

\subsection{M\"{o}bius transformations} 
Write $\B^{m}$ for the unit ball in $\R^{m}$ and ${\mathbb S}^{m-1} = \partial \B^{m}$. Define the M\"{o}bius transformations on the closed ball \cite[eq.(2.1.6)]{S16}, parametrized by $x \in \B^{m}$, as 
\[
T_{x}: \overline{\B}^{m} \to \overline{\B}^{m},
\]
\begin{equation}
\label{mobius}
T_{x}(y)=\frac{(1+2x \cdot y +|y|^{2})x + (1-|x|^{2})y}{1+2x \cdot y +|x|^{2}|y|^{2}}
, \qquad  y \in \overline{\B}^{\, m}.
\end{equation}
As a remark, $T_{0}$ becomes just the identity map on the ball. Also, $T_{x}(0)=x$, $T_{-x}=(T_x)^{-1}$, and $T_{x}$ maps ${\mathbb S}^{m-1}$ to ${\mathbb S}^{m-1}$, fixing the points $y= \pm x/|x|$. 

\subsection{Spherical caps, reflection, and folding} \label{maps}
The following material is drawn from \cite{K20} and is included here for the reader's convenience. We describe spherical caps. For any unit vector $p$ on the sphere, the closed hemisphere can be written as
\[
H_{p}=\{y \in {\mathbb S}^{m-1} : y \cdot p \leq 0\}, \qquad p \in {\mathbb S}^{m-1}.
\]
Next, define the spherical caps as the image of the hemisphere $H_p$ under some M\"{o}bius transformation:
\[
H \equiv H_{p,t} = T_{pt}(H_{p}), \qquad p \in {\mathbb S}^{m-1}, \qquad t \in [0,1).
\]
Here, the M\"{o}bius transformation sends boundaries to boundaries, that is, $T_{pt} (\partial H_{p})=\partial H_{p,t}$. Then we can write the spherical cap explicitly as
\[
H_{p,t}= \left\{ y \in {\mathbb S}^{n} : y \cdot p \leq \frac{2t}{1+t^{2}} \right\},
\]
which can be easily calculated by \autoref{mobius}. Note that as $t$ approaches to $1$, the spherical cap $H_{p,t}$ covers almost all the sphere toward $p$.

Given $b\in \R^{m} \setminus\{0\}$, the reflection $R_b$ in the hyperplane through the origin and perpendicular to the vector $b$ is defined as,
\[
R_{b}(y)=y -2 \frac{(y \cdot b)}{|b|^2 }b, \qquad y \in {\mathbb S}^{m-1}.
\]
By conjugation, we can define a reflection map across the boundary of the general spherical cap $H_{p,t}$: let
\begin{align*}
\label{conjug}
R_{H} \equiv R_{p,t}= T_{pt} \circ R_{p} \circ ( T_{pt})^{-1}: {\mathbb S}^{m-1} \to {\mathbb S}^{m-1}.
\end{align*} 
Note that $R_{H_{p,t}}(p)=-p$, which says that the reflection map sends $p$ to its antipodal point. Lastly, we define a ``fold map" that reflects the complement of the spherical cap across the boundary:
\[
F_{H}(y) \equiv F_{p,t}(y)=
\begin{cases}
y , & y \in H , \\
	R_{H}(y) , & y \in {\mathbb S}^{m-1} \setminus H.
\end{cases}
\]
Observe that the image of the boundary of the spherical cap $\partial H$ is itself. For simplicity, we use different notations $F_{H}$ and $F_{p,t}$ to denote the same map when it is clear from the context (similarly for $R_{H}$). 

\subsection{Center of mass and trial functions}
A point $c \in \B^{m}$ is the (hyperbolic) center of mass of a Borel measure $\mu$ on the sphere if
\begin{align}
\int_{{\mathbb S}^{m-1}} T_{-c}(y)  \, d\mu(y) =0. \label{center}
\end{align}

The trial functions are defined to be the components in $\R^{m(n)}$ of the vector valued map:
\begin{align}
y \mapsto  (T_{-c} \circ F_{H} \circ \Phi_n)(y), \qquad y \in \RP^n, \label{trial}
\end{align}
where $T_{-c}$ is a M\"{o}bius transformation on the ball $\B^{m(n)}$, $ F_{H}$ is a fold map on ${\mathbb S}^{m(n)-1}$, and $\Phi \equiv \Phi_n:  \RP^n \to {\mathbb S}^{m(n)-1} $ is the generalized Veronese map defined in \autoref{sec:veronese}. How the center of mass $c$ and spherical cap $H$ are chosen is explained below.


Next, we present two different topological proofs that the trial functions satisfy the orthogonality conditions. The first argument is based on the idea of Hersch \cite{H70} and the reflection symmetry lemma by Petrides \cite{P14}. The second argument is essentially from a result of Karpukhin and Stern \cite[Lemma 4.2]{KS20}. We give a new proof.
\subsection{Orthogonality of the trial functions by two-step proof} \label{sec: twostep}

Based on the construction above, we need to show that the trial functions are orthogonal to the constant and the first eigenfunction with respect to the metric $wg$, so that we can use the variational characterization of the second eigenvalue. The first proof of orthogonality of trial functions \autoref{trial} proceeds similarly to to my previous work \cite[p.\,3506]{K20}, as we now explain. 

 Define a push-forward measure $\mu$ on $\mathbb{S}^{m(n)-1}$ by $ \mu=  (F_H \circ \Phi)_{*} v_{wg}$ where $v_{wg}$ is the volume measure on $\RP^n$ with respect to the metric $wg$. 
A center of mass $c=c_H$ in \autoref{center} exists by Hersch's lemma; see Laugesen \cite[Corollary 5]{L20b} for the precise statement. The assumptions of the corollary are satisfied by extending $\mu$ outside $ F_{H}(\Phi(\RP^n))$ to all of the sphere with zero, noting that this extended pushforward measure is a finite Borel measure and $ 0 = \mu(\{y\}) < \frac{1}{2} \mu({\mathbb S}^{m(n)-1}) $ for all $y \in {\mathbb S}^{m(n)-1}$. The result of the corollary gives the existence and uniqueness of the center of mass $c_H = c_{p,t}$, and the center of mass depends continuously on the parameters of the spherical cap, $(p,t) \in {\mathbb S}^{m(n)-1} \times [0,1)$. 

We later need the continuity of the center of mass as $t \to 1$. The M\"{o}bius transformation and the fold map do not extend continuously when $t=1$, but nonetheless, the center of mass $c_{p,t}$ converges to a point $c(w)$ which depends only on the measure and is independent of $p$. See \cite[p.\,3506]{K20}. The underlying point is that the push-forward measure is weakly convergent as $t \to 1$.

The orthogonality of trial functions to the first excited state $f$ of $-\Delta_{wg}$ on $\RP^n$ requires that the following vector field vanishes at some point $(p,t) \in {\mathbb S}^{m(n)-1} \times [0,1)$:
\begin{equation}
\label{first}
V(p,t) = \int_{ \RP^n}^{} T_{-c_{H}}(F_H(\Phi(y))) f(y)\, dv_{wg}(y),
\end{equation}
where $H=H_{p,t}$. Note that from the continuous dependence on the parameters, the vector field \autoref{first} is continuous. Since the image of the Veronese map is in the sphere, the argument in my previous paper \cite[Section 2.8]{K20}, which is based on the topological argument by Petrides \cite[claim 3]{P14}, applies with obvious changes. We skip the details and state the result below from \cite[Proposition 8]{K20}.

\begin{proposition}[Vanishing of the vector field]\label{vanish}
$V(p,t)=0$ for some $p \in {\mathbb S}^{m(n)-1}$ and $t \in [0,1]$.
\end{proposition}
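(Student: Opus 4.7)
The plan is a topological degree argument on a compactified parameter space. Write $m = m(n)$ throughout. By the convergence of the push-forward measures and centers of mass discussed immediately before the statement, the continuous vector field $V : \mathbb{S}^{m-1} \times [0,1) \to \R^{m}$ extends continuously to $t = 1$ with a single value $V_\infty := \int_{\RP^n} T_{-c(w)}(\Phi(y)) f(y) \, dv_{wg}(y)$ that is independent of $p$. Collapsing $\mathbb{S}^{m-1} \times \{1\}$ to this one point turns the parameter space into the closed cone on $\mathbb{S}^{m-1}$, which is homeomorphic to the closed ball $\overline{\B}^{m}$; its outer boundary sphere corresponds to $t = 0$ (hemispherical caps) and its interior cone point to $t = 1$. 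The idea is that if $V$ were nowhere zero on this ball, $V / |V|$ would be a continuous retraction of $\overline{\B}^{m}$ onto $\mathbb{S}^{m-1}$, which would force the restriction of $V / |V|$ to the boundary $\{t = 0\}$ to have degree zero as a self-map of $\mathbb{S}^{m-1}$. I would contradict this by computing a nonzero degree at $t = 0$.

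The degree calculation on the boundary rests on a reflection symmetry at $t = 0$. When $t = 0$ the cap is the hemisphere $H_p$, and the fold map satisfies the intertwining $F_{H_{-p}} = R_p \circ F_{H_p}$, where $R_p$ is the linear reflection across $\{y \cdot p = 0\}$. Since every linear isometry intertwines the M\"obius transformations via $T_{R_p(x)} \circ R_p = R_p \circ T_{x}$, one obtains first $\mu_{-p,0} = (R_p)_{*}\mu_{p,0}$ for the push-forward measures, then $c_{-p, 0} = R_p(c_{p, 0})$ by uniqueness of the hyperbolic center of mass, and finally
\[
V(-p, 0) \;=\; R_p \, V(p, 0).
\]
Decomposing $V(p, 0) = \alpha(p) \, p + v_\perp(p)$ with $v_\perp \perp p$, this identity forces the radial coefficient $\alpha(p) := V(p, 0) \cdot p$ to be an even function of $p$. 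The heart of the argument is then to show that $\alpha(p)$ has a constant sign on $\mathbb{S}^{m-1}$.

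Once, say, $\alpha > 0$ is established, the straight-line homotopy $s \, p + (1-s) V(p, 0)$ stays nonzero for $s \in [0, 1]$ because its $p$-component is everywhere positive; hence the boundary map $p \mapsto V(p, 0) / |V(p, 0)|$ is homotopic to the identity of $\mathbb{S}^{m-1}$ and has degree one, contradicting its extension to a retraction. The main obstacle is precisely the sign of $\alpha(p)$. Controlling this sign uses that $f$ is a first excited state (so $\int f \, dv_{wg} = 0$), the explicit radial-pushing action of $T_{-c_{p,0}}$, and the support constraint that the folded measure $\mu_{p,0}$ sits inside $H_p$. Since $\Phi$ only serves to send $\RP^{n}$ into the ambient sphere $\mathbb{S}^{m-1}$, the analysis is then intrinsic to $(\mathbb{S}^{m-1}, \overline{\B}^{m})$ and proceeds exactly as in \cite[Section 2.8]{K20}, which in turn mirrors Petrides's claim 3 in \cite{P14}. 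Alternatively, the second orthogonality proof foreshadowed in \autoref{sec: twostep} (in the spirit of Karpukhin--Stern \cite[Lemma 4.2]{KS20}) packages the orthogonality to the constant and to the first eigenfunction into a single fixed-point statement and bypasses the explicit sign computation entirely.
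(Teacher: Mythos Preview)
Your overall architecture matches the paper exactly: extend $V$ continuously to $t=1$ (independent of $p$), view the parameter space as a closed ball, and argue that if $V$ never vanishes then the boundary map $p \mapsto V(p,0)/|V(p,0)|$ on $\mathbb{S}^{m-1}$ must have degree zero, while the reflection symmetry $V(-p,0)=R_{p}V(p,0)$ should force nonzero degree. You also derive the reflection symmetry correctly.

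The gap is in how you propose to obtain nonzero degree. You assert that ``the heart of the argument is then to show that $\alpha(p)=V(p,0)\cdot p$ has a constant sign on $\mathbb{S}^{m-1}$,'' and suggest this follows from $\int f\,dv_{wg}=0$, the action of $T_{-c_{p,0}}$, and the support constraint of the folded measure. None of those ingredients yields sign control on $\alpha$: the integrand $\big(T_{-c_{p,0}}(F_{H_{p}}(\Phi(y)))\cdot p\big)\,f(y)$ is a product of a factor whose sign is uncontrolled after applying $T_{-c_{p,0}}$ and an eigenfunction $f$ of an \emph{arbitrary} conformal metric about which one knows only $\int f=0$. There is no reason to expect $\alpha$ to be single-signed, and this is not what \cite[Section~2.8]{K20} or Petrides~\cite[claim~3]{P14} do.

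What those references actually prove is a purely topological lemma: any continuous map $W:\mathbb{S}^{m-1}\to\mathbb{S}^{m-1}$ satisfying $W(-p)=R_{p}W(p)$ has odd (hence nonzero) degree. The proof is Borsuk--Ulam style degree theory and uses nothing about $f$, the measures, or the M\"obius transformations; see also the version proved in this paper as \autoref{topdeg} (for the one-step variant) and the de~Rham approach in \cite{FL20}. So your outline is salvageable, but you should replace the ``constant sign of $\alpha$'' step with an invocation of Petrides's reflection-symmetry degree lemma applied to $W(p)=V(p,0)/|V(p,0)|$.
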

This result finishes the first argument for the orthogonality conditions. Before we proceed to the second argument, I would like to introduce different approaches to the orthogonality conditions for trial functions. Petrides \cite{P14} showed that a map with reflection symmetry has nonzero degree, leading to the two-step argument above. Freitas and Laugesen \cite{FL20} gave a new proof of Petrides's Lemma by a global approach using the de Rham definition of the degree. 

Karpukhin and Stern \cite[Lemma 4.2]{KS20} relied instead on the Lefschetz--Hopf fixed point theorem, obtaining both orthogonality conditions in a one-step proof. In the next section, we give a proof similar to their lemma by adapting a method for proving the Borsuk--Ulam theorem. A third approach by Bucur, Martinet and Nahon \cite{BMN22} relies on the index theorem to give a one-step proof in a closely related orthogonality situation.

\subsection{Orthogonality of the trial functions by one-step proof} \label{onestep}

We rely on \autoref{topdeg} in \autoref{sec:top2} to prove that for some choice of parameters, the trial functions are orthogonal to both the constant and the first excited state. Compared to the previous vector field defined in \autoref{first}, we assume now that the center of mass is an independent variable in the following vector field, 
\[
V(x,p,t) = \left( \int_{ \RP^n}^{} T_{-x}(F_H(\Phi( y))) \, dv_{wg}, \int_{ \RP^n}^{} T_{-x}(F_H(\Phi( y))) f(y)\, dv_{wg} \right),
\]
which is a map from $\B^{m(n)} \times {\mathbb S}^{m(n)-1} \times[0,1)$ to $\R^{2m(n)}$. Here $H = H_{p,t}$. Now, we extend $V$ continuously to $t=1$ and $|x|=1$. When $t =1$, the vector field is continuous and independent of $p$, by a similar argument as in the previous section. As $x \to \tilde{x} \in \mathbb{S}^{m(n)-1}$, the vector field can be extended continuously by the dominated convergence similarly and then the vector field becomes independent of $p$ and $t$. In fact, the vector field at $x= \tilde{x}$ is 
$$V(\tilde{x}, p , t) = \V_n(w)(-\tilde{x}, 0)$$ 
since the M\"{o}bius transformation degenerates and $\int_{\RP^n} f(y) \, dv_{wg} =0$. In the next proposition, we again use the fact that the degree is a homotopy invariant to show that the vector field vanishes at some choice of parameters. 

\begin{proposition}[Vanishing of the vector field]\label{vanish2}
$V(x,p,t)=0$ for some $(x,p) \in \overline{\B}^{m(n)} \times {\mathbb S}^{m(n)-1}$ and $t \in [0,1]$. 
\end{proposition}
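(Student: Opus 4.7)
The plan is to apply the topological tool \autoref{topdeg} of \autoref{sec:top2}---a one-step Borsuk--Ulam / Lefschetz--Hopf style result in the spirit of Karpukhin--Stern \cite[Lemma 4.2]{KS20}---to the vector field $V$ on the compact manifold (with corners) $M := \overline{\B}^{m(n)} \times \mathbb{S}^{m(n)-1} \times [0,1]$. The dimensional match $\dim M = m(n) + (m(n)-1) + 1 = 2m(n) = \dim \R^{2m(n)}$ makes a topological degree argument natural, and indeed this matching is what distinguishes the present one-step setup from the two-step argument of \autoref{sec: twostep}, where $x$ was constrained to the center-of-mass submanifold.

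First I would reconfirm that $V$ extends continuously to all of $M$, relying on the weak-convergence and dominated-convergence arguments already indicated in the excerpt. The crucial boundary data are: on $\{|x|=1\}$, $V(\tilde x, p, t) = \V_n(w)(-\tilde x, 0)$, independent of $p$ and $t$ and antipodal in $\tilde x$; on $\{t=1\}$, $V$ is independent of $p$; on $\{t=0\}$, the spherical cap degenerates to a hemisphere and the fold $F_H$ becomes reflection across a great $(m(n)-2)$-sphere. The topological heart of the argument is the antipodal structure of $V$ on $\{|x|=1\}$, which plays the role of the nontrivial boundary datum in the classical Borsuk--Ulam argument.

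Assuming for contradiction that $V$ is nowhere zero on $M$, the normalized map $V/|V|: M \to \mathbb{S}^{2m(n)-1}$ is well-defined and continuous. On the face $\{|x|=1\}$ the normalized field factors through the antipodal map $\tilde x \mapsto -\tilde x$ of $\mathbb{S}^{m(n)-1}$, embedded into the first $m(n)$ coordinates of $\mathbb{S}^{2m(n)-1}$, and so it represents a nontrivial element in the relevant homotopy/degree class. Applying \autoref{topdeg} to this configuration shows that such a nonvanishing extension across all of $M$ is topologically obstructed, producing the desired zero of $V$. The main obstacle will be managing the corner structure of $\partial M$ together with the degeneracies of $V$ on the faces $\{t=0\}$ and $\{t=1\}$; my expectation is that \autoref{topdeg} is designed precisely to absorb this combinatorics, but if not, a fallback is to collapse the degenerate fibers---identifying $\mathbb{S}^{m(n)-1} \times \{1\}$ to a point and the $(p,t)$-fibers over $\{|x|=1\}$ to points---so that $M$ becomes a ball-type model on which the antipodal degree computation on the boundary is transparent and forces an interior zero.
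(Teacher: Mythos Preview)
Your proposal has a genuine gap: you have misidentified both the key symmetry and the way \autoref{topdeg} is meant to be used. \autoref{topdeg} is not a general Borsuk--Ulam obstruction for nonvanishing fields on a manifold with corners; it is a degree computation for a self-map $\phi:\mathbb{S}^{2m-1}\to\mathbb{S}^{2m-1}$ satisfying the specific reflection symmetry $(R_b\times R_b)\phi(a,b)=\phi(R_b(a),-b)$ together with $\phi(a,0)=(a,0)$. The ``antipodal structure'' on $\{|x|=1\}$ that you single out is not the driving symmetry---under the correct identification it supplies only the boundary condition $\phi(a,0)=(a,0)$. The genuine reflection symmetry comes from the $t=0$ slice, via the identity $F_{-p,0}=R_p\circ F_{p,0}$ for the hemispherical fold and the compatibility $T_{R_p x}\circ R_p = R_p\circ T_x$; without isolating and verifying this $R_b$-equivariance, \autoref{topdeg} simply does not apply.

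The paper's argument does not try to make all of $M$ into a ball. Instead it collapses only the $(x,p)$-factor: one passes to the quotient of $\overline{\B}^m\times\mathbb{S}^{m-1}$ by $(x,p)\sim(x,q)$ for $|x|=1$, which is homeomorphic to $\mathbb{S}^{2m-1}$ via an explicit map $(x,p)\mapsto(a,b)$. The variable $t$ is then kept as a \emph{homotopy parameter}, giving a family $W_t:\mathbb{S}^{2m-1}\to\mathbb{S}^{2m-1}$. At $t=1$ the fold is trivial and $W_1$ depends only on the $m$-dimensional parameter $a$, hence is not surjective and has degree $0$. At $t=0$ one checks the $R_b$-reflection symmetry above, and \autoref{topdeg} gives $\deg W_0\neq 0$. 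Homotopy invariance of degree yields the contradiction. Your fallback of collapsing fibers gestures toward the first step, but treating $t$ as an extra coordinate of a ball rather than as the homotopy parameter misses the $t=0$ versus $t=1$ degree dichotomy that actually closes the argument.
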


\begin{proof}
Let $m =m(n)$. We endow the following equivalence relation on $\overline{\B}^{m} \times {\mathbb S}^{m-1}$: For any $x \in \partial \overline{\B}^{m}$ and $p,q \in {\mathbb S}^{m-1}$, we say $(x,p) \sim (x, q)$. Define a homeomorphism $\overline{\B}^{m} \times {\mathbb S}^{m-1}/{\mathord {\sim }}  \to  {\mathbb S}^{2m-1}$ as $(x,p) \mapsto (\sqrt{2-|x|^2}x, (|x|^2-1)p)=(a,b)$. (This homeomorphism makes the boundary points of the ball collapse onto the points in the sphere such that the second part of the coordinates are zero.)  The inverse parameters are
$$x(a) =- \frac{a}{\sqrt{1+|b|}} \qquad \text{and} \qquad  p(b) =-\frac{b}{|b|}.$$ 
When $b \neq 0$, $x(a)$ only depends on $a$ since $|b|^2 =1-|a|^2$. When $b=0$, we have $x(a)= -a$ and $p(b)$ is undefined. 

Next, we precompose V with the inverse of the homeomorphism and denote the map as 
$$\widetilde{V}(a,b, t)=V(x(a), p(b), t), \qquad (a,b) \in \mathbb S^{2m-1},$$ 
where for $b=0$, we have $|a|=1$, $x(a) =-a$, $|x(a)|=1$, and so $\widetilde{V}(a,0, t)=\V_n(w)(a,0)$. 

Suppose $V(x,p,t)$ does not vanish, so that $\widetilde{V}$ does not vanish. We obtain a contradiction later. For each $t$, let $W_t(a,b):=\widetilde{V}(a,b,t)/|\widetilde{V}(a,b, t)|$ be a map from ${\mathbb S}^{2m-1}$ to itself. When $b=0$, note $W_t(a,0) = (a,0)$.

When $t=1$, the fold map becomes identity on all of the sphere except at one point, so we have
\[
W_1(a,b) = \left( \int_{ \RP^n}^{} T_{-x(a)}(\Phi(y)) \, dv_{wg}, \int_{ \RP^n}^{} T_{-x(a)}(\Phi(y)) f(y)\, dv_{wg} \right).
\]
The map $W_1$ is not surjective onto $\mathbb{S}^{2m-1}$ since it is smooth and the right side depends only on the $m$ dimensional parameters $a$, while $2m-1>m$. Hence, $W_1$ is homotopic to a constant map and so has degree zero, which implies that $W_0$ also has degree zero.

When $t=0$, the map $(a,b) \mapsto W_0(a,b)$ satisfies the reflection symmetry condition \autoref{refsym} below, by the following calculation. When $b \neq 0$, we calculate that
\begin{align}
&\widetilde{V}(R_b(a),-b,0) \nonumber \\
&= \left( \int_{ \RP^n}^{} T_{-x(R_b(a))}(F_{p(-b),0}(\Phi( y))) \, dv_{wg}, \int_{ \RP^n}^{} T_{-x(R_b(a))}(F_{p(-b),0}(\Phi( y))) f(y)\, dv_{wg} \right)  \nonumber\\
&= \left( \int_{ \RP^n}^{} T_{-R_b(x(a))} (R_b F_{p(b),0}(\Phi( y))) \, dv_{wg}, \int_{ \RP^n}^{}T_{-R_b(x(a))} (R_b F_{p(b),0}(\Phi( y))) f(y)\, dv_{wg} \right)\nonumber \\
& \hspace{2cm} \text{since $x(R_b(a)) = R_b(x(a))$ by linearity of $R_b$, and $F_{p(-b),0} = R_b F_{p(b),0}$}\nonumber\\
&= \left( R_b\int_{ \RP^n}^{} T_{-x(a)}(F_{p(b),0}(\Phi( y))) \, dv_{wg}, R_b \int_{ \RP^n}^{} T_{-x(a)}(F_{p(b),0}(\Phi( y))) f(y)\, dv_{wg} \right)  \nonumber \\
& \hspace{2cm} \text{by the property $T_{R_b  a}(R_b y) = R_b T_{a}(y)$ in \cite[p. 3507]{K20}} \nonumber\\
&= (R_b \times R_b) \widetilde{V}(a,b,0).\nonumber
\end{align}
Now the reflection symmetry condition for $W_0$ follows by dividing each side by its norm. When $b=0$, $W_0(a,0) = (a,0)$. Thus \autoref{refsym} holds for $W_0$.

Hence, the map $W_0$ has nonzero degree by \autoref{topdeg}, which is a contradiction.
\end{proof}

\section{\bf Calculation of the degree of the self-maps } \label{sec:top2}

In this section, we calculate the degree of the continuous maps between odd-dimensional spheres with reflection symmetry. Recall the following facts about the toplogical degree of continuous maps on a sphere. See Outerelo and Ruiz \cite[Chapter IV.4]{OR09}. Let  $\varphi :\overline\B^{n+1} \to \R^{n+1}$ be a continuous map such that $\varphi({\mathbb S}^n) \subset \R^{n+1} \setminus \{0\}$. Define a continuous map $\phi: {\mathbb S}^n \to {\mathbb S}^n$ as $\phi(x) =\varphi(x)/ |\varphi(x)|$ for $x \in {\mathbb S}^n$. It is well-known that for any point $p \in {\mathbb S}^n$, $d(\varphi,\overline\B^{n+1}, 0)=\text{deg}(\phi, {\mathbb S}^n, p)$. Moreover, since the degree of $\phi$ is consistent on any $p \in {\mathbb S}^n$, we may write $\text{deg}(\phi)=\text{deg}(\phi, {\mathbb S}^n, p)$. The main goal of this section is to show the following theorem. 

\begin{theorem} \label{topdeg}
Let $\phi: {\mathbb S}^{2n+1} \to {\mathbb S}^{2n+1}$ be a continuous map and assume that the map satisfies the following reflection symmetry property:
\begin{equation}
\label{refsym}
\begin{split}
(R_b \times R_b) \phi(a,b) &= \phi(R_b(a), -b) \qquad\text{when $b \neq 0$,} \\
\phi(a,0) &= (a,0) \qquad\qquad\quad\, \text{when $b = 0$,}
\end{split}
\end{equation}
for all $a, b \in \R^{n+1}$ with $(a,b) \in {\mathbb S}^{2n+1}$. Then $\textnormal{deg}(\phi) =1$ when $n$ is odd, and $\textnormal{deg}(\phi) $ is odd when $n$ is even.
\end{theorem}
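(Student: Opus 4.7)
The strategy is to adapt the classical proof of the Borsuk--Ulam theorem. The domain involution $\sigma(a,b) := (R_b(a), -b)$ plays the role of the antipodal map, with one twist: its fixed locus is the equator $E := \{(a,0) : a \in {\mathbb S}^n\} \cong {\mathbb S}^n$, on which $\phi$ is prescribed by \eqref{refsym} to be the identity, while it acts freely on ${\mathbb S}^{2n+1}\setminus E$. Moreover, the deformation retract $E' := \{(0,b) : b \in {\mathbb S}^n\}$ of ${\mathbb S}^{2n+1}\setminus E$ carries the antipodal $\sigma$-action, so the quotient $({\mathbb S}^{2n+1}\setminus E)/\sigma$ is homotopy equivalent to $\RP^n$---the classical setting for Borsuk--Ulam.

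First I would reduce to smooth $\phi$ via an equivariant smoothing that preserves \eqref{refsym} (e.g.\ averaging a standard convolution smoothing against $\sigma$); this does not alter the degree. Then I would analyze the normal derivative along $E$: since $\phi|_E = \mathrm{id}$, each $q \in E$ is a preimage of itself, with local sign $\mathrm{sgn}(\det M(q))$, where $M(q) := D_b \phi(q,0) \colon \R^{n+1} \to \R^{n+1}$. Substituting $b = tv$ with $v \in {\mathbb S}^n$ into \eqref{refsym} and sending $t \to 0^+$ yields
\begin{equation*}
M(R_v(q))\, v \;=\; -R_v\, M(q)\, v, \qquad v \in {\mathbb S}^n.
\end{equation*}
For $v \in q^\perp$ we have $R_v(q) = q$, which forces $M(q)\, v$ to lie in $\R v$ (the $-1$-eigenspace of $R_v$). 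A short bilinearity argument, applying $M(q)$ to pairs of orthogonal vectors in $q^\perp$, then shows $M(q)|_{q^\perp} = \lambda(q)\, I_n$ for a single scalar $\lambda(q)$. Writing $u(q) := M(q)\, q$, a lower-triangular determinant computation in the basis $\{q, e_1, \dots, e_n\}$ of $\R^{n+1}$ gives
\begin{equation*}
\det M(q) \;=\; (u(q) \cdot q)\; \lambda(q)^n.
\end{equation*}

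Next I would handle the preimages of $q$ lying in ${\mathbb S}^{2n+1} \setminus E$. Because the codomain action in \eqref{refsym} depends on the domain point $b$, the free involution $\sigma$ does not send $\phi^{-1}(q) \setminus \{q\}$ to itself, and so preimages are not paired by $\sigma$-orbits. Instead, I would run an equivariant cohomological/bordism argument on the quotient $({\mathbb S}^{2n+1}\setminus E)/\sigma \simeq \RP^n$---mirroring the classical Borsuk--Ulam proof via $H^*(\RP^n; \mathbb{Z}/2)$---to show that the signed count of these extra preimages is even, so that the parity of $\deg(\phi)$ is determined by the local sign at $q$.

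Combining these pieces: for $n$ even, $\lambda(q)^n \geq 0$, so $\mathrm{sgn}(\det M(q)) = \mathrm{sgn}(u(q) \cdot q)$, a nonzero continuous function on ${\mathbb S}^n$ and hence of constant sign; the degree is therefore odd. For $n$ odd, both $\lambda(q)$ and $u(q) \cdot q$ contribute to the sign, and a continuity argument comparing $\phi$ with the identity model (where $\lambda \equiv 1$ and $u(q) = q$, both positive) pins the overall sign to $+1$. The main obstacle is the mod-2 pairing step: the twisted codomain action in \eqref{refsym} rules out a naive $\sigma$-orbit pairing of preimages, and the correct substitute is an equivariant cohomological argument on $\RP^n$, which should be the heart of the proof.
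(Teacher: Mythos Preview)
Your proposal has two genuine gaps. The first is the one you flag yourself: the mod-2 pairing of preimages in ${\mathbb S}^{2n+1}\setminus E$. Because $\phi(\sigma(a,b))=(R_b\times R_b)\phi(a,b)$, the involution $\sigma$ sends a preimage of $q=(q_1,0)$ to a preimage of $(R_b q_1,0)$, a point of $E$ that \emph{depends on $b$}; so $\sigma$ does not act on $\phi^{-1}(q)$, and there is no evident map from $({\mathbb S}^{2n+1}\setminus E)/\sigma$ to a fixed target on which a Borsuk--Ulam style $H^*(\RP^n;\mathbb Z/2)$ argument would control the signed preimage count at a single $q$. Invoking ``an equivariant cohomological argument on $\RP^n$'' here is a hope, not a proof. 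The second gap is specific to $n$ odd: even a successful mod-2 pairing would give only $\deg(\phi)\equiv 1\pmod 2$. To obtain $\deg(\phi)=1$ exactly you need the extra preimages to sum to \emph{zero} and the local sign $\text{sgn}\det M(q)$ to be $+1$; your ``continuity argument comparing $\phi$ with the identity'' presupposes a homotopy through maps satisfying \autoref{refsym}, which is essentially the conclusion you are trying to establish. (A smaller technical point: $\sigma$ is not even continuous across $b=0$, so ``averaging against $\sigma$'' to smooth $\phi$ equivariantly needs separate care near $E$.)

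The paper's proof sidesteps both problems by working with the Brouwer degree on the ball rather than counting preimages on the sphere. One extends $\phi$ to a continuous $\phi_3:\overline{\B}^{2n+2}\to\R^{2n+2}$ that equals the identity on a small ball $U$ about the origin and keeps the reflection symmetry on the annulus $D_1=\B^{2n+2}\setminus U$; the extension is built by induction on the dimension of the $b$-variable (\autoref{extend}, \autoref{extend2}). Additivity then gives $\deg(\phi)=d(\phi_3,D_1,0)+1$. Splitting $D_1$ by the sign of $b_{n+1}$, a direct Jacobian change-of-variables (\autoref{changedeg}) shows the symmetry forces $d(\phi_3,D_1^-,0)=(-1)^n\,d(\phi_3,D_1^+,0)$, hence $d(\phi_3,D_1,0)=(1+(-1)^n)\,d(\phi_3,D_1^+,0)$. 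For $n$ odd this is $0$ and the degree is exactly $1$; for $n$ even it is even and the degree is odd. The exact cancellation for odd $n$ falls out of the sign $(-1)^n$ with no separate argument---precisely the step your local-preimage approach cannot supply.
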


As a remark, Karpukhin and Stern \cite[Lemma 4.2]{KS20} showed that the degree is odd with a similar reflection symmetry condition.

The strategy of the proof is derived from an extension proof of the well-known Borsuk--Ulam theorem. See \cite[Theorem 5.2]{OR09}. The goal is to  extend the map on the sphere to the one on the closed ball while preserving continuity and the reflection symmetry. We focus on the fact that on the hyperplane $\R^{2n+1} = \{ (a,b): a \in \R^{n+1}, b \in \R^{n} \times \{ 0\}\}$, the reflection map $R_b$ can be written as a Cartesian product of the reflection on  $\R^{n}$ and an identity on $\R$. Considering this hyperplane allows us to extend the map step-by-step by induction on dimension of the domain. In particular, the dimension of the parameter of the reflection map determines the inductive step. The extension of the map to all of $\overline\B^{2n+2}$ consists of an identity map on an $\epsilon$-ball at the origin and a continuous map with reflection symmetry on the remaining domain. After proving these extension lemmas, we calculate the degree of the extended map using the reflection symmetry. 

For simplicity, $i$ is used for an inclusion map or identity map depending on the context. We use $\R^n$ instead of $\R^{2n+1}$ in the next lemma to keep the notation simple for now, and return to using $\R^{2n+1}$ again after the next lemma. 
\begin{defn}
Let $D \subset \R^n$ be a set and $k$ be a fixed integer such that $1 \leq k  \leq n/2$. Consider $(a,b) \in \R^{n}$ where $a \in \R^{n-k}$ and $b \in \R^k  $. The domain is called \textit{$k$-symmetric} if $(a,b) \in D$ $\iff$ $((R_b \times i) (a), -b) \in D$, whenever $b \neq 0$. In the special case when $n$ is even and $k=n/2$, $D \subset \R^n$ being \textit{$n/2$-symmetric} means $(a,b) \in D$ $\iff$ $(R_b (a), -b) \in D$.
\end{defn}

Let's first extend a map when the dimension of its $k$-symmetric domain is lower than that of the codomain.
\begin{lemma} \label{extend}
Let $D \subset \R^{n}$ ($n \geq 3$) be bounded, open and $k$-symmetric with fixed $1 \leq k < n/2$ such that $0 \notin \overline{D}$. Let $\phi: \partial D \to \R^m \setminus\{0\}$, $n<m$ ($m$ is even) be a continuous map with the following reflection symmetry property:
\begin{equation}
\label{refsym2}
\begin{split}
(R_b \times i \times R_b \times i) \phi (a,b) &= \phi((R_b \times i) (a), -b)  \qquad  \text{when $b \neq 0$,} \\
\phi(a,0) &= (a,0)  \in \R^m \qquad\qquad\quad \! \text{when $b = 0$,} 
\end{split}
\end{equation}
for all $(a,b) \in \partial D$ with $a \in \R^{n-k}$  and $b \in \R^k$. Then there exists a continuous map $\varphi: \overline{D} \to \R^m \setminus \{0\}$ which extends $\phi$ and satisfies the properties \autoref{refsym2} for $(a,b) \in \overline{D}$ .
\end{lemma}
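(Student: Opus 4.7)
The plan is to construct $\varphi$ by induction on $k$, using the involution $\sigma(a,b) := ((R_b \times i)(a), -b)$ to reduce the extension problem to a fundamental half-domain and then propagate by symmetry. Let $b_k$ denote the last coordinate of $b \in \R^k$, and set
\[
D^+ := \overline D \cap \{b_k \geq 0\}, \quad D^- := \overline D \cap \{b_k \leq 0\}, \quad D_0 := \overline D \cap \{b_k = 0\}.
\]
By $k$-symmetry of $D$, the involution $\sigma$ interchanges $D^+$ and $D^-$, and $D_0 = D^+ \cap D^-$ is $\sigma$-invariant.

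First I would construct $\varphi$ on the interface $D_0$. Observe that when $b = (b', 0)$ with $b' \in \R^{k-1}$, the reflection $R_b$ on $\R^k$ decomposes as $R_{b'} \times i$ (the hyperplane perpendicular to $b$ contains the $k$-th basis vector), so $D_0$ inherits a $(k-1)$-symmetric structure in the hyperplane $\{b_k = 0\} \simeq \R^{n-1}$. For $k \geq 2$, I would invoke an inductive form of the lemma to produce a continuous $\varphi : D_0 \to \R^m \setminus \{0\}$ satisfying the $(k-1)$-symmetry and agreeing with $\phi$ on $\partial D \cap D_0$. For the base case $k = 1$, I would simply set $\varphi(a, 0) := (a, 0)$, which is continuous, agrees with $\phi$ on $\partial D \cap D_0$ by the second line of \autoref{refsym2}, and lies in $\R^m \setminus \{0\}$ because the assumption $0 \notin \overline D$ forces $a \neq 0$.

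Next I would extend $\varphi$ from $\partial D^+ = (\partial D \cap D^+) \cup D_0$ to all of $D^+$. The two pieces of boundary data agree on their common part $\partial D \cap D_0$ (both equal $\phi$), so the combined data is a continuous map into $\R^m \setminus \{0\}$. Since $\R^m \setminus \{0\}$ deformation retracts onto $\mathbb{S}^{m-1}$, which is $(m-2)$-connected, and $\dim D^+ \leq n < m$, standard obstruction theory (or a Tietze extension to $\R^m$ followed by a generic perturbation off the origin, which is permissible because the codimension of $\{0\}$ exceeds $\dim D^+$) supplies a continuous extension to $D^+$. Then on $D^-$ I would define
\[
\varphi(a, b) := (R_b \times i \times R_b \times i)\,\varphi((R_b \times i)(a), -b),
\]
which is well-defined because $\sigma(D^-) = D^+$ and which satisfies \autoref{refsym2} on $D^-$ by construction. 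Continuity at points of $D_0$ with $b \neq 0$ follows from the $(k-1)$-symmetry already arranged on $D_0$ in the previous step; continuity at points with $b = 0$ follows from the boundary condition $\varphi(a, 0) = (a, 0)$, which is respected by both the Tietze extension on $D^+$ and the symmetry formula on $D^-$.

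The main obstacle I anticipate is making the inductive step on $D_0$ rigorous: the slice $D_0$ is compact but is not open in $\R^{n-1}$, so the lemma as stated does not directly apply to it. I would address this either by slightly generalizing the statement so that it covers compact $(k-1)$-symmetric subsets of $\R^{n-1}$ (the proof only uses compactness, the connectivity of the target, and the symmetry, not openness), or by constructing a thin $k$-symmetric open neighborhood of $D_0$ inside $\R^{n-1}$, applying the inductive lemma there, and restricting back to $D_0$. Once this technical point is settled, the scheme above yields a continuous $\varphi : \overline D \to \R^m \setminus \{0\}$ extending $\phi$ and obeying \autoref{refsym2}.
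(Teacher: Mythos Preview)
Your proposal is correct and follows essentially the same route as the paper: both arguments slice along $\{b_k=0\}$, extend the boundary data over that slice by an inductive appeal (with $k$ replaced by $k-1$), push the combined boundary data across the upper half using the fact that $\dim D^+<m$ so a nowhere-zero extension exists, and then propagate to the lower half by the symmetry formula. The only cosmetic difference is that the paper phrases the induction on the ambient dimension $n$ rather than on $k$, and it spends more space on the delicate continuity check at points with $b=0$ (where $R_b$ has no limit); you correctly flag the openness issue for $D_0$, which the paper handles implicitly by treating $D\cap\R^{n-1}$ as the open set for the inductive step.
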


Here are a few remarks about the statement to illustrate the lemma. The reason for assuming $n<m$ and $m$ is even is due to the definition of reflection symmetry property: on the left hand side of \autoref{refsym2}, $R_b$ acts on $k$ dimensional space and $i$ acts on $(m/2-k)$-dimensional space; on the right side, $i$ is an identity map acting on $(n-2k)$-dimensional space.
\begin{proof}
Define $\phi(a,0) =(a,0)$ for all $a \in \R^{n-k}$, noting this extended $\phi$ is still continuous. We begin induction on $n$ with the case when $n=3$. Here, we only consider the case when $k=1$, since $1 \leq k<3/2$: $a \in \R^2 $ and $b \in \R$. For convenience, we identify the plane $\R^{2}$ with the hyperplane $\R^2 \times \{0\}$ in $\R^3$. Let $\phi_1: \partial D \cup ( \overline{D} \cap \R^2)  \to \R^m$ be defined as
\begin{align*}
\phi_1 =
\begin{cases}
\phi \qquad  \text{on $\partial D$,}\\
i \qquad \,\, \text{on $\overline{D} \cap \R^2$}.
\end{cases}
\end{align*}
Note that the map is well-defined and continuous, satisfying the reflection symmetry. The image does not contain 0 since $\phi$ does not vanish on $\partial D$ and $0 \notin \overline{D}$. Let $D_+ :=\{(a,b) \in D : b>0 \}$ and $D_{-} :=\{(a,b) \in D : b<0 \}$. We can rewrite $\overline D$ as $$\overline{D} = \partial D \cup D_+ \cup D_- \cup (\overline{D} \cap \R^2).$$ By the continuous extension lemma \cite[Lemma 5.1 (1)]{OR09} and since $\partial D_{+}$ is a compact set, there exists a continuous extension of $\phi_1 |_{\partial{D_{+}}}$ to $\phi_2: \overline{D}_{+} \to \R^m$ that is nowhere zero. Using the fact that $R_b$ is simply the map $N: \R \to \R$, $N(x)=-x$, when $b \neq 0$, let $\varphi: \overline{D} \to \R^m$ be defined as
\begin{align*}
\varphi(a,b) =
\begin{cases}
\phi_2(a,b) \qquad \qquad\qquad\qquad\qquad\qquad \quad \quad\text{when $(a,b) \in \overline{D}_{+}$,}\\
\left( N \times i  \times N \times i \right) \phi_2 ((N \times i)(a), -b)\qquad \text{when $(a,b) \in \overline{D}\setminus\overline{D}_{+}$.}
\end{cases}
\end{align*}
One can check that the map is continuous, nowhere-zero and satisfies the reflection symmetry. 

Suppose for induction that the lemma holds for domains in $\R^{n-1}$, $n\geq 4$. We prove the result for $D \subset \R^n$. Consider $k$ to be any integer such that $1 \leq k < n/2 $. Let $a \in \R^{n-k}$ and $b \in \R^k$. Denote the last coordinate of $b$ as $b_k$ and identify $\R^{n-1}$ with $\{(a,b) \in \R^{n} : b_k =0\}$.

By the induction hypothesis, we may extend the map from $\partial D\cap\R^{n-1}$ to $ \overline{D} \cap \R^{n-1}$, as follows. Note that when $b_k =0$, the reflection map $R_b$ can be written as a Cartesian product of the reflection map and identity or inclusion map on the last coordinate. We first check the case when $k >1$. By the induction hypothesis with $k-1$ and $n-1$, we can extend the map on $\partial D\cap\R^{n-1} $ to the map on all of $\overline{D} \cap \R^{n-1}$ so that it is continuous, nowhere-zero and satisfies the reflection symmetry. When $k=1$, the map can simply be extended as $\phi(a,0) =(a,0) \in \R^m$ on $\overline{D} \cap \R^{n-1}$ since $b \in \R$. In either case, we denote the extension map of $\phi|_{\partial D\cap\R^{n-1}}$ as $\phi_1: \overline{D} \cap \R^{n-1} \to \R^m$. 

The extension from previous step allows us to define $\phi_2:  ( \overline{D} \cap \R^{n-1}) \cup \partial D \to \R^m$ as 
\begin{align*}
\phi_2 =
\begin{cases}
\phi \qquad \,\, \text{on $\partial D$,}\\
\phi_1 \qquad \text{on $\overline{D} \cap \R^{n-1}$.}
\end{cases}
\end{align*}
Let $D_+ :=\{(a,b) \in D : b_k>0 \}$ and $D_{-} :=\{(a,b) \in D : b_k<0 \}$. We can rewrite $\overline D$ as $\overline{D} = \partial D \cup D_+ \cup D_- \cup (\overline{D} \cap \R^{n-1}).$ Similarly, by the continuous extension lemma \cite[Lemma 5.1 (1)]{OR09}, there exists an extension of $\phi_2 |_{\partial{D_{+}}}$ to $\phi_3: \overline{D}_{+} \to \R^m$ that is nowhere-zero. Define $\varphi: \overline{D} \to \R^m$ as
\begin{align*}
\varphi(a,b) =
\begin{cases}
\phi_3(a,b) \qquad \,\,\, \qquad\qquad\qquad\qquad\qquad\quad\quad\,\text{when $(a,b) \in \overline{D}_{+}$,}\\
\left( R_b \times i  \times R_b \times i \right) \phi_3 ((R_b \times i)(a), -b)\qquad \text{when $(a,b) \in \overline{D}\setminus\overline{D}_{+}$.}
\end{cases}
\end{align*}
One can check that the map is nowhere-zero and satisfies the reflection symmetry, and is continuous where $b_k \neq 0$. Showing continuity where $b_k =0$ requires a careful argument. We only need to check the direction from $D_-$ since the other direction is easy due to the continuity of $\phi_3$. When $k=1$, the reflection map $R_b=N$ does not depend on $b$ and the continuity from $D_-$ follows easily. When $k \neq 1$, we want to show that as $(a,b) \to (\tilde{a},\tilde{b})$ where $b_k >0$ and $\tilde{b}_k =0$, we have $\left( R_b \times i  \times R_b \times i \right) \phi_3 ((R_b \times i)(a), -b) \to \phi_3(\tilde a,\tilde b)$. Note that when $\tilde b  \neq 0$, $R_b \to R_{\tilde b }$ continuously and so the norm of the difference
\begin{align*}
&|\left( R_b \times i  \times R_b \times i \right) \phi_3 ((R_b \times i)(a), -b) - \phi_3(\tilde a,\tilde b)|\\
&=|\phi_3 ((R_b \times i)(a), -b) -\left( R_b \times i  \times R_b \times i \right) \phi_3(\tilde a,\tilde b)| 
\end{align*}
approaches 0 as $(a,b) \to (\tilde{a},\tilde{b})$ by the reflection symmetry of $\phi_2$ on $\overline{D} \cap \R^{n-1}$. When $b=0$,  it is enough to consider a sequence $\{(a^j,b^j)\}_{j=1}^{\infty}$ in $D_-$ converging to $(\tilde{a},0)$. By the extension $\phi_3(\tilde a, 0)=(\tilde a, 0)$ for all $a \in \R^{n-k}$, we may write the norm of the difference as
\begin{align}
&|\left( R_{b^j} \times i  \times R_{b^j} \times i \right) \phi_3 ((R_{b^j} \times i)(a^j), -b^j) - (\tilde a, 0)| \nonumber
\\&=|\phi_3 ((R_{b^j} \times i)(a^j), -b^j) - (( R_{b^j} \times i) (\tilde a), 0) |\nonumber
\\& = |\phi_3 ((R_{b^j} \times i)(a^j), -b^j) - \phi_3  (( R_{b^j} \times i) (\tilde a), 0) |.\label{con1} 
\end{align}
We want to show that this quantity approaches 0. Note that  the last line holds even though $ (( R_{b^j} \times i) (\tilde a), 0)$ might not be in $D_+$, since we extended $\phi_3$ to all of $b \neq0$. Moreover, we have that the quantity
$$|((R_{b^j} \times i)(a^j), -b^j)-(( R_{b^j} \times i) (\tilde a), 0)|=|(a^j,b^j) -( \tilde a, 0)|$$ 
approaches 0 as $j \to \infty $. By the uniform continuity of $\phi_3$ on compact sets, the quantity \autoref{con1} approaches 0 as $j \to \infty$.
\end{proof}

The next lemma shows that the map can be extended from the boundary to the interior when the dimensions of the domain and the codomain are equal. For convenience, we identify $\R^{2n+1}$ with the hyperplane $\R^{2n+2} \cap \{ b_{n+1} =0\}$.
\begin{lemma} \label{extend2}
Let $D \subset \R^{2n+2}$ ($n \geq 1$) be bounded, open and $(n+1)$-symmetric set such that $0 \notin \overline{D}$. Let $\phi: \partial D \to \R^{2n+2} \setminus \{0\}$ be a continuous map with the reflection symmetry property \autoref{refsym2}. Then $\phi$ can be extended to $\varphi: \overline{D} \to \R^{2n+2}$ which is continuous with reflection symmetry and $\varphi \neq 0 $ on $\overline{D} \cap \R^{2n+1}$.
\end{lemma}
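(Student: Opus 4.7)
The plan mirrors the inductive step in the proof of \autoref{extend}, with the crucial twist that \autoref{extend} itself will supply the nowhere-vanishing extension on the hyperplane $\overline{D} \cap \R^{2n+1}$; off the hyperplane we only need a continuous extension (no nonvanishing is required there by the conclusion of the lemma), which can be obtained by a Tietze-type argument combined with reflection across $\{b_{n+1} = 0\}$.

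First, I would identify $\R^{2n+1}$ with the hyperplane $\{(a,b) \in \R^{2n+2} : b_{n+1} = 0\}$. Writing $a = (a', a_{n+1}) \in \R^n \times \R$ and $b = (b', 0)$ with $b' \in \R^n$, a direct computation using $a \cdot (b',0) = a' \cdot b'$ gives $R_{(b',0)}(a) = (R_{b'}(a'), a_{n+1}) = (R_{b'} \times i)(a)$. The reflection symmetry of $\phi|_{\partial D \cap \R^{2n+1}}$ then takes exactly the form of \autoref{refsym2} with domain parameter $k = n$, domain dimension $2n+1 \geq 3$, and codomain $\R^{2n+2}$ (which is even and strictly larger than the domain). \autoref{extend} therefore produces a continuous extension $\phi_1 : \overline{D} \cap \R^{2n+1} \to \R^{2n+2} \setminus \{0\}$ satisfying the reflection symmetry.

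Next, glue $\phi$ on $\partial D$ with $\phi_1$ on $\overline{D} \cap \R^{2n+1}$ into a continuous map $\phi_2$ on $\partial D \cup (\overline{D} \cap \R^{2n+1})$, set $D_{\pm} = \{(a,b) \in D : \pm b_{n+1} > 0\}$, and use the continuous extension lemma \cite[Lemma 5.1(1)]{OR09} to extend $\phi_2|_{\partial D_+}$ to a continuous $\phi_3 : \overline{D}_+ \to \R^{2n+2}$. Then define
\[
\varphi(a,b) = \begin{cases} \phi_3(a,b), & (a,b) \in \overline{D}_+, \\ (R_b \times R_b)\,\phi_3(R_b a, -b), & (a,b) \in \overline{D} \setminus \overline{D}_+. \end{cases}
\]
The two branches agree on $\overline{D} \cap \R^{2n+1}$ because $\phi_3|_{\overline{D} \cap \R^{2n+1}} = \phi_1$ already satisfies the reflection symmetry, and $\varphi$ then inherits the reflection symmetry globally by construction. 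Nonvanishing of $\varphi$ on $\overline{D} \cap \R^{2n+1}$ is automatic since $\varphi = \phi_1 \neq 0$ there.

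The main obstacle, exactly as in \autoref{extend}, is verifying continuity of $\varphi$ along $\{b_{n+1} = 0\}$, and especially at points where $b = 0$ so that $R_b$ is undefined. I would copy the argument from the previous lemma essentially verbatim: for $(a,b) \to (\tilde a, \tilde b)$ with $\tilde b \neq 0$, one uses continuity of $b \mapsto R_b$ and the reflection symmetry of $\phi_1$ on $\overline{D} \cap \R^{2n+1}$; for a sequence $(a^j, b^j) \in D_-$ converging to $(\tilde a, 0)$, the normalization $\phi_1(\tilde a, 0) = (\tilde a, 0)$ together with uniform continuity of $\phi_3$ on the compact set $\overline{D}_+$ yields $|(R_{b^j} \times R_{b^j})\phi_3(R_{b^j}a^j, -b^j) - (\tilde a, 0)| = |\phi_3(R_{b^j}a^j, -b^j) - \phi_3(R_{b^j}\tilde a, 0)| \to 0$, exactly as in the calculation surrounding \autoref{con1}.
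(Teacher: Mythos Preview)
Your proposal is correct and follows essentially the same route as the paper: apply \autoref{extend} with $k=n$ on the hyperplane $\{b_{n+1}=0\}$ to get a nowhere-zero extension $\phi_1$ there, glue with $\phi$ on $\partial D$, extend continuously over $\overline{D}_+$, and reflect to $D_-$, with continuity at $b=0$ handled as in \autoref{con1}.

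One small point: the paper deliberately invokes the Tietze extension theorem at the step where you cite \cite[Lemma 5.1(1)]{OR09}, and remarks that this is \emph{not} the same extension lemma used in \autoref{extend}. The earlier lemma produces a nowhere-zero extension precisely because the domain dimension is strictly less than the target dimension; here domain and target are both $\R^{2n+2}$, so that lemma is not available, and one must settle for plain Tietze (which gives no control on zeros off the hyperplane). You clearly understand this distinction---your opening paragraph says exactly that---but your formal citation in the body should be to Tietze rather than to \cite[Lemma 5.1(1)]{OR09}.
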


\begin{proof}
Let $D_+ :=\{(a,b) \in D : b_{n+1}>0 \}$ and  $D_{-} :=\{(a,b) \in D : b_{n+1}<0 \}$. By \autoref{extend}, extend $ \phi|_{\partial D\cap \R^{2n+1} }$ to $\phi_1:\overline{D}\cap \R^{2n+1}  \to \R^{2n+2}$ so that it is continuous, nowhere-zero and satisfies the reflection symmetry. Define $\phi_2: (\overline{D}\cap \R^{2n+1} ) \cup \partial D \to \R^{2n+2}$ by 
\begin{align*}
\phi_2 =
\begin{cases}
\phi \qquad \,\, \text{on $\partial D$,}\\
\phi_1 \qquad \text{on $\overline{D} \cap \R^{2n+1}$.}
\end{cases}
\end{align*}
Let $\widehat{D} = \partial D \cup (\overline{D}\cap \R^{2n+1} ) \cup \overline{D}_{+}$. Next, we use the Tietze extension theorem to extend the map $\phi_2|_{\partial D \cup (\overline{D}\cap \R^{2n+1})}$ to $\phi_3: \widehat{D} \to \R^{2n+1}$ since $\partial D_+ \subset \widehat{D}$ are both compact sets. As a remark, the extension theorem used here is different from the one used earlier since the dimensions are now the same between the domain and the codomain, so the nowhere-zero property only holds at $\overline{D}\cap \R^{2n+1}  $ instead of $\overline{D}$. Let $\varphi: \overline{D} \to \R^{2n+2}$ be defined as
\begin{align*}
\varphi(a,b) =
\begin{cases}
\phi_3(a,b) \qquad\qquad\qquad\qquad \quad \text{when $(a,b) \in \widehat{D}$,}\\
(R_b \times R_b)\phi_3(R_b(a),-b) \qquad \text{when $(a,b) \in D_-$.}
\end{cases}
\end{align*}
It is easy to check that the map $\varphi$ is nowhere-zero in $\overline{D}\cap \R^{2n+1} $ and satisfies the reflection symmetry. Continuity follows by a similar argument to the previous lemma.
\end{proof}
Next, for \autoref{changedeg} we rely on the following proposition from \cite[Proposition 3.2]{OR09}. 
\begin{proposition} 
\label{degdiff}
Let $f: \overline{D} \to \R^{n+1}$ be a continuous mapping whose restriction to $D$ is $C^1$ and let $a \in \R^{n+1} \setminus f(\partial D)$ be a regular value of $f|_{D}$. Then $f^{-1}(a)$ is finite and 
\begin{align*}
d(f,D,a) = \displaystyle\sum_{x \in f^{-1}(a)} \textnormal{signdet} \left(J_f(x)\right).
\end{align*}
\end{proposition}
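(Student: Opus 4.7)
The plan is to prove this via the classical local-diffeomorphism decomposition together with the additivity/excision axioms of the Brouwer degree, which I will take as already established for $d(f,D,a)$.

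First I would show that $f^{-1}(a)$ is finite. Since $a \notin f(\partial D)$ and $f$ is continuous on $\overline{D}$, the preimage $f^{-1}(a)$ is a closed subset of the compact set $\overline{D}$, hence compact. At each $x \in f^{-1}(a) \subset D$ the Jacobian $J_f(x)$ is nonsingular by the regular-value hypothesis, so the inverse function theorem yields an open neighborhood $U_x \subset D$ of $x$ on which $f$ restricts to a $C^1$ diffeomorphism onto an open neighborhood of $a$. In particular $U_x \cap f^{-1}(a) = \{x\}$, so $f^{-1}(a)$ is discrete as well as compact, hence finite. Write $f^{-1}(a) = \{x_1, \dots, x_k\}$ and shrink the neighborhoods so that the $U_{x_i}$ are pairwise disjoint.

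Next I would pass to a small enough target neighborhood. Choose an open ball $B = B(a, \rho) \subset \R^{n+1}$ with $\rho$ so small that $B \subset f(U_{x_i})$ for each $i$ and $B \cap f(\partial D) = \emptyset$; such $\rho$ exists because $f(\partial D)$ is compact and does not contain $a$. Set $V_i = U_{x_i} \cap f^{-1}(B)$. Then $V_i$ is an open neighborhood of $x_i$ on which $f$ is a diffeomorphism onto $B$, and by construction $f^{-1}(a) \cap (D \setminus \bigcup_i V_i) = \emptyset$. By the excision property of the degree and additivity over disjoint open sets,
\begin{align*}
d(f, D, a) \;=\; d\!\left(f, \bigcup_{i=1}^k V_i, a\right) \;=\; \sum_{i=1}^k d(f, V_i, a).
\end{align*}

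The final step is to identify each local degree with $\operatorname{signdet} J_f(x_i)$. Since $f|_{V_i} : V_i \to B$ is a $C^1$ diffeomorphism, it is a homeomorphism onto its image, and the local degree of a diffeomorphism at an interior regular point equals $+1$ or $-1$ according to whether $Df$ preserves or reverses orientation. Concretely, one can homotope $f|_{V_i}$ through the $C^1$ path $f_s(x) = f(x_i) + (1-s) [f(x) - f(x_i)] + s\, J_f(x_i)(x - x_i)$ within the class of maps with $a$ as a regular value away from $\partial V_i$, reducing to the linear case $x \mapsto J_f(x_i)(x - x_i)$, whose degree is well known to be $\operatorname{sign} \det J_f(x_i)$. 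Summing over $i$ gives the claimed formula.

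The main obstacle is the bookkeeping around the homotopy invariance in the last step: one must verify that the straight-line homotopy $f_s$ keeps $a$ outside the image of $\partial V_i$ for all $s \in [0,1]$, which requires shrinking $V_i$ further to a neighborhood where $f$ is sufficiently close to its linearization. Once that is arranged (again via the inverse function theorem quantitatively), the degree is constant along the homotopy and the computation reduces to the linear model, completing the proof.
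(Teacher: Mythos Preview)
The paper does not prove this proposition at all: it is quoted verbatim as \cite[Proposition 3.2]{OR09} and used as a black box. Your argument is the standard textbook proof (finiteness via compactness plus the inverse function theorem, excision/additivity to localize, then homotopy to the linearization to identify each local degree with $\operatorname{sign}\det J_f(x_i)$), and it is correct. The only point to tidy is the one you already flag: after possibly shrinking $V_i$ so that $\lvert f(x)-f(x_i)-J_f(x_i)(x-x_i)\rvert$ is small relative to $\lvert J_f(x_i)(x-x_i)\rvert$ on $\overline{V_i}$, the straight-line homotopy keeps $a$ off $f_s(\partial V_i)$ and the degree is constant in $s$.
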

For the next lemma, we use the same notations $D_{+}$ and $D_{-}$ for the sets with a certain reflection relationship (not necessarily depending the sign of the last coordinate). They are more general sets compared to the ones in the previous extension lemmas. 

\begin{lemma}[Change of Variables of the Degree]
\label{changedeg}
Let $D_{+} \subset \R^{2n+2}$ be a bounded and open set such that for $(a,b) \in \R^{n+1}$, we have $b \neq 0$ whenever $(a,b) \in  D_{+}$. Define $D_{-}:= \{ (R_b(a),  -b ): (a,b) \in D_{+}\}$ and let $\phi: \overline{D}_{-} \to \R^{2n+2}$ be a continuous map with $\phi(a,0) =(a,0)$ whenever $(a,0) \in \overline{D}_{-}$, and $0 \notin \phi( \partial D_{-}) $. If the map $\Psi: \overline{D}_+ \to \R^{2n+2}$ is defined as
\begin{align*}
\Psi(a,b) &=  (R_b \times R_b) \phi(R_b(a),-b), \qquad\text{for } b \neq 0,  \\
\Psi(a,0) &= (a,0), \qquad\qquad\qquad\quad\quad\quad \,\,\text{for $b= 0$}, 
\end{align*}
then the degrees are related by $d(\phi, D_{-}, 0)=(-1)^{n} d(\Psi, D_{+}, 0)$.
\end{lemma}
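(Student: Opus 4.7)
The plan is to view $\Psi$ as a composition involving the change-of-variables map $S(a,b) := (R_b(a), -b)$, which takes $D_+$ bijectively onto $D_-$ (and is in fact an involution, since $R_b = R_{-b}$), and to compute the Jacobian determinant explicitly to track the sign difference. Writing $M_b := R_b \oplus R_b$ for the block-diagonal orthogonal $(2n+2)\times(2n+2)$ matrix, we have $\Psi(a,b) = M_b \cdot \phi(S(a,b))$ for $b \neq 0$. Since $M_b$ is invertible, the bijection $S$ sends zeros of $\Psi$ in $D_+$ to zeros of $\phi$ in $D_-$ (apart from possibly the point $(0,0)$, handled separately below).

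Next, I would apply \autoref{degdiff} after reducing to the $C^1$ setting with $0$ a regular value. Differentiating $\Psi = M_b \phi(S(\cdot))$ at a zero $(a,b)$ of $\Psi$ kills the term involving $\partial_{(a,b)} M_b$, because $\phi(S(a,b)) = 0$ there. What remains is
\begin{equation*}
J_\Psi(a,b) = M_b \, J_\phi(S(a,b)) \, J_S(a,b).
\end{equation*}
We have $\det M_b = (\det R_b)^2 = 1$. The Jacobian $J_S$ is block upper-triangular in the $(a,b)$ splitting, with diagonal blocks $R_b$ (from $a \mapsto R_b(a)$ for fixed $b$) and $-I_{n+1}$ (from $b \mapsto -b$); the off-diagonal block $\partial R_b(a)/\partial b$ does not affect the determinant. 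Thus $\det J_S = (-1)\cdot(-1)^{n+1} = (-1)^n$, and hence $\operatorname{signdet} J_\Psi(a,b) = (-1)^n \operatorname{signdet} J_\phi(S(a,b))$. Summing over the preimages of $0$ via the bijection $S$ and invoking \autoref{degdiff} gives
\begin{equation*}
d(\Psi, D_+, 0) = (-1)^n \, d(\phi, D_-, 0),
\end{equation*}
equivalently $d(\phi, D_-, 0) = (-1)^n d(\Psi, D_+, 0)$.

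The main obstacle is regularity, together with the handling of the singular locus $\{b = 0\}$: the map $b \mapsto R_b$ is not smooth at the origin, and $\phi$ is only assumed continuous. For the regularity issue, I would approximate $\phi$ uniformly on $\overline{D}_-$ by $C^1$ maps $\phi_k$ that agree with $\phi$ outside a small neighborhood of $\{b=0\}$ and satisfy $\phi_k(a,0) = (a,0)$ there; Sard's theorem then lets one perturb the target so that $0$ is a regular value, and stability of the degree under small perturbations (with $0 \notin \phi(\partial D_-)$) preserves both $d(\phi_k, D_-, 0) = d(\phi, D_-, 0)$ and the analogous equality for $\Psi_k$. For the $\{b = 0\}$ locus, since $\phi(a,0) = (a,0)$, the only possible zero there is $(0,0)$; one can use excision by cutting out a tiny ball around $(0,0)$ (whose contribution to both degrees is $+1$ in a consistent way, as $\Psi$ and $\phi$ both reduce to the identity near $(0,0)$), restrict attention to the open set where $b \neq 0$, and there the Jacobian calculation above is valid without modification.
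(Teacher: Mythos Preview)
Your core computation is exactly the paper's: write $\Psi = M_b \cdot (\phi \circ S)$, differentiate at a zero so that the $\partial_b M_b$ term drops, and read off $\det M_b = 1$ and $\det J_S = (\det R_b)(\det(-I_{n+1})) = (-1)^n$. The paper merely splits this into two steps via an intermediate map $\psi := \phi \circ S$, but the content is identical.

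Your handling of the regularity and the $\{b=0\}$ locus is slightly off, however. First, you cannot take $C^1$ approximants $\phi_k$ that literally \emph{agree} with the merely continuous $\phi$ outside a neighborhood of $\{b=0\}$; you need genuine mollification everywhere. Second, the excision around $(0,0)$ is misdirected: since $b \neq 0$ on $D_+$ (hence on $D_-$), every point with $b=0$ in $\overline{D}_-$ lies on $\partial D_-$, and $(0,0)$ cannot occur there at all because $\phi(0,0)=(0,0)$ would contradict $0 \notin \phi(\partial D_-)$. So there is no ``$+1$ contribution'' to remove. The correct move, which the paper makes, is to excise the full slab $\{|b| \le \epsilon\}$ from $D_-$: by continuity and $0 \notin \phi(\partial D_-)$ one can choose $\epsilon$ so that $\phi$ has no zeros there, and on the remaining set $\overline{D_{-,\epsilon}}$ the variable $b$ is bounded away from $0$, so mollification and the homotopy $k_t(a,b) := M_b\, h_t(S(a,b))$ between $\Psi$ and its smooth approximant go through without any singularity of $R_b$.
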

Note that whenever $b=0$ on $\overline{D}_+$, the point lies on the boundary $\partial D_+$, similarly for $\overline{D}_-$. Moreover, $0 \notin \Psi( \partial D_{+}) $ due to the assumption $0 \notin \phi( \partial D_{-}) $. 

\begin{proof}
\textit{Step 1 --  Continuity of the maps.} 

To show that $\Psi$ is continuous on $\overline{D}_{+}$, we only need to check boundary points where $b=0$, due to continuity of $\phi$ and the assumption that $b \neq 0$ whenever $(a,b) \in  D_{+}$. The continuity of $\Psi$ follows by arguing like in the proof of \autoref{extend}. 

For the next step, define $\psi: D_{+} \to \R^{2n+2}$ as
\begin{align*}
\psi(a,b) &= \phi(R_b(a),-b)
\end{align*}
As a remark, $\psi$ might not be extended to the closure but is introduced for convenience. We write $\phi(a,b) = (\phi_1(a,b), \phi_2(a,b))$ and similarly for $\psi$ and $\Psi$. 

\textit{Step 2 -- The degree of $\Psi$ in terms of $\psi$.} 

We first assume that $\phi \in C(\overline{D}_{-}) \cap C^1(D_{-})$ and that $0$ is a regular value. By \autoref{degdiff} applied to $\Psi$, and using that $b \neq 0$ on $D_+$,
\begin{align}
d(\Psi,D_+,0) &= \displaystyle\sum_{(a,b) \in \Psi^{-1}(0)} \text{sign} \text{det} \left( J_{\Psi}(a,b) \right) \nonumber\\
&= \displaystyle\sum_{(R_b \times R_b)\psi(a,b)=0}\text{sign}  \text{det}  
\begin{pmatrix}
\partial_{a} R_b \psi_1(a,b) & \partial_{b} R_b \psi_1(a,b) \\[12pt]
\partial_{a} R_b \psi_2(a,b) &\partial_{b} R_b \psi_2(a,b) 
\end{pmatrix}\nonumber \\
& = \displaystyle\sum_{\psi(a,b)=0}\text{sign}  \text{det}  
\begin{pmatrix}
R_b (\partial_{a}  \psi_1(a,b)) &R_b ( \partial_{b}  \psi_1(a,b)) \\[12pt]
R_b (\partial_{a} \psi_2(a,b)) &R_b (\partial_{b} \psi_2(a,b))
\end{pmatrix} \label{productrule},
\end{align}
as follows. In the first column of the matrix in \autoref{productrule}, the reflection map $R_b$ is independent of the variable $a$. For the second column, the terms including $\psi(a,b)$ vanish because we evaluate the derivatives at $\psi(a,b)=0$; this allows us to take $R_b$ outside the partial derivatives.

\textit{Step 3 -- The degree of $\psi$ in terms of $\phi$.} 

We calculate the sign of the determinants inside the sum in \autoref{productrule}, 
\begin{align}
&\text{sign}\text{det}  
\begin{pmatrix}
R_b (\partial_{a}  \psi_1(a,b)) &R_b ( \partial_{b}  \psi_1(a,b)) \\[12pt]
R_b (\partial_{a} \psi_2(a,b)) &R_b (\partial_{b} \psi_2(a,b))
\end{pmatrix} \nonumber \\
&=\text{sign}  \text{det}  
\begin{pmatrix}
R_b & 0 \\[12pt]
0& R_b
\end{pmatrix}
\begin{pmatrix}
\partial_{a}\psi_1(a,b) & \partial_{b} \psi_1(a,b)\\[12pt]
\partial_{a}\psi_2(a,b) &\partial_{b}\psi_2(a,b)
\end{pmatrix} \nonumber \\
& = \text{ sign} \text{det} 
J_{\psi}(a,b) ,
\label{degpsi}
\end{align}
where $R_b$ is the matrix for the reflection map $R_b$ and the determinant of the block matrix is $1$ since det$R_b = -1$. Next, we calculate the Jacobian $J_{\psi}(a,b)$ in \autoref{degpsi}:

\begin{align}
J_{\psi}(a,b) &= 
\begin{pmatrix}
\partial_{a} \left( \phi_1(R_b(a),-b) \right)& \partial_{b} \left( \phi_1(R_b(a),-b)\right) \\[12pt]
\partial_{a} \left( \phi_2(R_b(a),-b)\right) &\partial_{b} \left( \phi_2(R_b(a),-b)\right)
\end{pmatrix}
 \nonumber \\ 
&= 
J_{\phi}(R_b(a),-b)
\begin{pmatrix}
 R_b & \partial_{b}  R_b(a) \\[12pt]
0& -I
\end{pmatrix}  \label{degpsi2},
\end{align}
by the chain rule. The determinant of the second matrix in \autoref{degpsi2} is $(-1)^{n+2}$ since the determinant of the block matrix can be calculated as for a 2$\times$2 matrix when the lower left submatrix is 0. So we can conclude that $d(\phi, D_{-}, 0)=(-1)^{n} d(\Psi, D_{+}, 0)$.

\textit{Step 4 -- Smooth approximation of continuous $\phi$.}

We now assume that $\phi$ is just continuous on $\overline{D}_{-}$ and 0 is not necessarily a regular value. 

First suppose there is no point of the form $(a,0)$ in $\overline{D}_-$. That is, if $(a,b) \in  \overline{D}_-$ then $b \neq 0$. Choose a $C^{1}(\overline{D}_{-})$-map $\tilde{\phi}$ homotopic to $\phi$ such that 0 is a regular value with $0 \notin \tilde{\phi}(\partial D_{-})$. To recall briefly about the existence of such functions, we use the extension theorem to extend $\phi$ outside the domain $D_-$ and then we use mollification. We can choose $\tilde{\phi}$ close enough to $\phi$ so that the image of the boundary does not contain 0. Also, if $0$ is not a regular value, by Sard's theorem, there exists a close enough value $p$ near 0 that is a regular value. We shift the map by $-p$ and hence make 0 a regular value of $\tilde{\phi}$. Due to the assumption that there is no point of the form $(a,0)$ in $\overline{D}_-$, the condition $\phi(a,0) =(a,0)$ does not need to be considered for mollification. 

Define $\tilde{\Psi}:D_{+} \to \R^{2n+2}$ from $\tilde{\phi}$ similarly to how $\Psi$ is constructed from $\phi$. Note that this map is $C^{1}(D_{+})$ and continuous on $\overline{D}_{+}$. We conclude that  $d(\tilde{\phi}, D_{-}, 0)=(-1)^{n} d(\tilde{\Psi}, D_{+}, 0)$ by repeating step 2 and 3.

Next, based on the fact that  $R_b$ is a linear isomorphism when $b \neq 0$, we show that $\tilde{\Psi}$ is homotopic to $\Psi$. Consider a  continuous homotopy $h_t(a,b)$ defined on $\overline{D}_{-} \times [0,1]$ where $h_0(a,b)= \tilde{\phi}$ and $h_1(a,b)= \phi$. We may choose that $0 \notin h_t( \partial D_-)$ for all $t \in [0,1]$.

Define $k_t(a,b):= (R_b \times R_b) h_t(R_b(a),-b)$. We want to show that $k_t$ is a homotopy from $\tilde{\Psi}$ to $\Psi$ and $0 \notin k_t(\partial D_{+})$ for all $t \in [0,1]$. The continuity of $k_t$ follows since $h_t$ is continuous and $R_b \times R_b$ is continuous when $b \neq 0$. We show $0 \notin k_t(\partial D_{+})$ by contradiction. Suppose $0 \in  k_t(\partial D_{+})$, say $k_t(a,b) =0$ for some $(a,b) \in \partial D_+$. Then $h_t(R_b(a),-b) =0$, which is impossible since $(R_b(a),-b) \in \partial D_-$. Hence, $0 \notin k_t( \partial D_+)$. We have $d(\phi, D_{-}, 0)= d(\tilde{\phi}, D_{+}, 0)$ and similarly for $\Psi$ and $\tilde{\Psi}$, so we have the conclusion in Step 3.

Next suppose there does exist a point of the form $(a,0)$ in $\overline{D}_-$, define a set $D_{-, \epsilon} \subset D_-$ where $|b| >\epsilon$ for all $D_{-, \epsilon}$ such that $ 0 \notin \phi(D_- \setminus D_{-, \epsilon} )$. Such an $\epsilon$ exists since $\phi$ is continuous and $0 \notin \phi( \partial D_{-}) $. By the excision property \cite[p. 44]{OR09}, $d(\phi|_{D_{-,\epsilon}},D_{-, \epsilon},0) =d(\phi, D_{-}, 0)$ and $d(\Psi|_{D_{+, \epsilon}},D_{+, \epsilon},0) =d(\Psi, D_{+}, 0)$. Note that $ D_{-, \epsilon} $ was explained above as the case when $b \neq 0$. By combining the degree relationship between $D_{-}$ and $D_{-,\epsilon}$, and the previous case, we have the conclusion of the lemma.
\end{proof}

As a remark, we are not assuming any reflection symmetry for $\phi$ in the previous lemma since we only want to understand the relationship of the degrees between $\phi$ and $\Psi$. 
\begin{proof}[Proof of \autoref{topdeg}]
Fix $0< \epsilon <1$ and define $U := \overline{B(0, \epsilon)} \subset \B^{2n+2} $ and $D_1 := \B^{2n+2} \setminus U$.

Define $\phi_1: U \cup {\mathbb S}^{2n+1} \to \R^{2n+2}$ by 
\begin{align*}
\phi_1 =
\begin{cases}
i \quad\quad\, \text{on $ U$},\\
\phi \qquad \text{on ${\mathbb S}^{2n+1}$}.
\end{cases}
\end{align*}
Then $\phi_1$ is a continuous function on $\partial D_1 = {\mathbb S}^{2n+1} \cup \partial U$ that is the identity near the origin and $\phi_1$ satisfies the reflection symmetry. By \autoref{extend2}, there exists a continuous extension $\phi_2$ of $\phi_1 |_{\partial D_1} $ such that $\phi_2:\overline{D}_1 \to \R^{2n+2}$ with the reflection symmetry and $\phi_2 \neq 0$ on $\overline{D}_1 \cap \R^{2n+1}$. 

Define the following continuous map $\phi_3$ on $\B^{2n+2}  =U \cup \overline{D}_1$,
\begin{align*}
\phi_3=
\begin{cases}
i \qquad \,\,\,\, \text{on $ U$,}\\
\phi_2 \qquad \text{on $\overline{D}_1$}.
\end{cases}
\end{align*}
It is easy to check that the map is well-defined, continuous, and satisfies the reflection symmetry.  

We first show that $d(\phi_2, D_1, 0)$ is 0 when $n$ is odd and even when $n$ is even. Let $D_1^+ :=\{(a,b) \in D_1 : b_{n+1}>0 \}$ and $D_1^{-} :=\{(a,b) \in D_1 : b_{n+1}<0 \}$. Using the additivity property \cite[Proposition 2.5]{OR09}, we deduce
$$
d(\phi_2, D_1,0) =  d(\phi_2,D^{+}_1, 0 )+ d(\phi_2,D^{-}_1, 0 ),
$$
since $0 \notin \phi_2(\overline{D}_1 \setminus (D^{+}_1 \cup D^{-}_1)$.
Next, using \autoref{changedeg}, we conclude 
\begin{align*}
d(\phi_2, D_1,0)  =
\begin{cases}
0 \qquad\qquad\qquad\quad\,  \text{when $n$ is odd,}\\
2d(\phi_2,D^{+}_1, 0 )\qquad \text{when $n$ is even.}
\end{cases}
\end{align*}
Denote int as the interior of a set. By applying the additivity property again, we have
\begin{align*}
\text{deg}(\phi) &= d(\phi_3, \B^{2n+2},0) \\
&= d(\phi_3, \B^{2n+2} \setminus U,0) +d(\phi_3, \text{int}\, U,0)\\
&= d(\phi_2, D_1,0)+d(i, \text{int}\,U,0) \\
&= d(\phi_2, D_1,0)+1,
\end{align*}
where the additive property holds since $0 \notin \phi_3 (\overline\B^{2n+2} \setminus ( D_1 \cup \text{int}\,U))$. Hence, $\text{deg}(\phi) =1$ when $n$ is odd and $\text{deg}(\phi)$ is odd when $n$ is even, which finishes the proof of \autoref{topdeg}.
\end{proof}

\section{\bf Estimating the Rayleigh Quotient --- proof of \autoref{thm}} \label{sec:rayleigh}
Recall that the Veronese map $\Phi$ maps $\RP^n$ into ${\mathbb S}^{m-1}$ where $m(n)$ is written as $m$ for simplicity. We apply the trial functions constructed in \autoref{sec:trial} to the Rayleigh quotient, to estimate the second eigenvalue from above.

The variational characterization states that
\begin{align}
\label{Rayleigh}
\lambda_{2}(\RP^{n},w) \leq \frac{\int_{\RP^n}^{}|\nabla_{\!wg\,}u|_{wg}^{2} \, dv_{wg}}{\int_{\RP^n}^{}u^{2} \, dv_{wg}},
\end{align}
where $u$ is a trial function in $H^{1}(\RP^n)$ which satisfies the orthogonality condition to the constant and to the first excited state. By substituting the definition of trial functions \autoref{vec} into \autoref{Rayleigh}, we have

\begin{equation}
\label{Rayleigh2}
\lambda_{2}(\RP^{n}, w) \leq \frac{\int_{\RP^{n}}^{}|\nabla_{\!wg\,}(T_{-c} \circ F_{H} \circ \Phi)_{j}|_{wg}^{2} \, dv_{wg}}{\int_{\RP^{n}}^{}(T_{-c} \circ F_{H} \circ \Phi)_{j}^{2} \, dv_{wg}}, \qquad j=1, \dots , m.
\end{equation} 

For each $j$, we multiply by the denominator on both sides of the inequality \autoref{Rayleigh2} and sum for all $j$'s. Note that $(T_{-c} \circ F_{H} \circ \Phi)(y)$ is a point on the unit sphere. This implies that the sum of the denominators is equal to the volume of $\RP^n$ with the metric $wg$, that is, $ \int_{\RP^n} \sum_{j} (T_{-c} \circ F_{H} \circ \Phi)_j^{2} dv_{wg} = \V (\RP^n, wg)$. Recall that by our normalization, $\V (\RP^n, wg)= \V_n (1)$. Hence,  the inequality in \autoref{Rayleigh2} can be written as,
\[
\lambda_{2}(\RP^{n},w) \V_n(1) \leq  \int_{\RP^{n}}^{} \sum_{j=1}^{m(n)}|\nabla_{\!wg\,}(T_{-c} \circ F_{H} \circ \Phi)_{j}|_{wg}^{2} \, dv_{wg}.
\]
By applying H\"{o}lder's inequality to the right hand side,
\begin{align*}
 &\lambda_{2}(\RP^{n},w) \V_n(1)  
\\ &\leq \left( \int_{\RP^{n}}^{}  \left( \sum_{j=1}^{m(n)} |\nabla_{\! wg\,}(T_{-c} \circ F_{H} \circ \Phi )_{j}|_{wg}^{2}\right)^{\!\! n/2} dv_{wg}  \right)^{ \! \!2/n} \! \V_n(1)^{1-2/n}.
\end{align*}
(As a remark, the application of H\"{o}lder is not needed when $n=2$.) We introduce a new notation: for an $n$-dimensional manifold $(M,g)$ and a map $F:M \to {\mathbb S}^{m-1}$, we denote $|\nabla_g F|_{g} = \sqrt{\sum_{j=1}^{m} |\nabla_{\!g\,}(F )_{j}|_{g}^{2}}$. The inequality becomes
\begin{align}
 \lambda_{2}(\RP^{n},w)^{n/2}  \, \V_n(1)  &\leq   \int_{\RP^{n}}^{}|\nabla_{\!wg\,} (T_{-c} \circ F_{H} \circ \Phi)|_{wg}^{n} \, dv_{wg} \nonumber
\\& = \int_{\RP^{n}}^{}|\nabla_{\!g \,}(T_{-c} \circ F_{H} \circ \Phi)|_{g}^{n} \, dv_{g}, \nonumber
\end{align}
by changing $wg$ to the round metric $g$ on $\RP^n$ and using conformal invariance. We again change the variables from $\RP^n$ to $\Phi(\RP^n)$, which is an embedded submanifold of $\mathbb S^{m-1}$. Denote $n$-dimensional Hausdorff measure on the submanifold as $d{\mathcal H}_n$ and write $\nabla$ for the gradient on the embedded submanifold. Since $\Phi$ is conformal by \autoref{phiconformal},
\begin{align}
\lambda_{2}(\RP^{n},w)^{n/2}  \, \V_n(1) &  \leq \int_{\Phi(\RP^{n})}^{}|\nabla(T_{-c} \circ F_{H} )|^{n} \, d{\mathcal H}_n \nonumber
\\&=   \int_{\Phi(\RP^n) \cap  H }^{}|\nabla(T_{-c})|^{n} \, d{\mathcal H}_n+ \int_{\Phi(\RP^n) \cap H^c }^{}|\nabla (T_{-c} \circ R_H)|^{n} \, d{\mathcal H}_n, \nonumber
\end{align}
where $H^c= {\mathbb S}^{m-1} \setminus H$. 

By changing variable in each part and using the fact that the reflection $R_H$ and M\"{o}bius transformation $T_{-c}$ are conformal, we have the following inequality:
\begin{align}
 &\lambda_{2}(\RP^{n},w)^{n/2} \, \V_n(1)  \nonumber
\\&\leq \int_{T_{-c}(\Phi(\RP^n) \cap H )  }^{}|\nabla y|^{n} \,d{\mathcal H}_n (y)  +\int_{ T_{-c}( R_{H}(\Phi(\RP^n)) \cap H^{\text{int}})}|\nabla y|^{n} \,d{\mathcal H}_n (y) \label{applychange}
\end{align}
where $H^{\text{int}} =R_H (H^{c})$ is the interior of $H$. Recall that the Veronese surface is embedded into ${\mathbb S}^{m-1}$. At each point $\tilde{y}$ in the surface, the tangent space has dimension $n$, and after rotating the coordinate system, we may suppose $\tilde{y}=(0, \dots, 0, 1)$ and the tangent space can be written as the span of $\{ \partial_{y_1}, \dots ,\partial_{y_n} \}$ where $y = (y_1, \dots, y_n, y_{n+1}, \dots, y_m)$. At $\tilde{y}$ we compute
\begin{align}
|\nabla y|^n= \left( \sum^{n}_{j=1} |\nabla y_j|^2 \right)^{\!n/2}= n^{n/2}. \label{eigenvalue}
\end{align}
Now substitute \autoref{eigenvalue} into \autoref{applychange}. Then we have
\begin{align}
 &\lambda_{2}(\RP^{n},w)^{n/2} \, \V_n(1)  \nonumber
\\&\leq n^{n/2}\big( \V_n \left( T_{-c}(\Phi(\RP^n) \cap H)  \right) + \V_n \left( T_{-c}( R_{H}(\Phi(\RP^n)) \cap H^{\text{int}}\right) \big) \nonumber
\\&< n^{n/2}\big( \V_n ( T_{-c}(\Phi(\RP^n))  ) + \V_n ( T_{-c}( R_{H}(\Phi(\RP^n)) ) \big), \label{volumedivide}
\end{align}
where we dropped the intersections with $H$ and $H^{\text{int}}$. Note that the last inequality \autoref{volumedivide} is strict, as follows. Either $\Phi(\RP^n) \cap H^c$ has positive $n$-volume or else $\Phi(\RP^n) \cap H$ does, or both, and so after reflecting the second set with $R_H$ we deduce that $\Phi(\RP^n) \cap H^c$ or $R_H(\Phi(\RP^n)) \cap (H^{int})^c$ or both have positive $n$-volume. Hence when we drop the intersections with $H$ and $H^{int}$ in \autoref{volumedivide}, at least one of the volumes becomes strictly larger. This last line is bounded above by $2 n^{n/2}$ times the $(m-1)$-conformal volume of $\Phi$ (see El Soufi and Illias \cite[p.259]{EI86}), since $T_{-c}$ and $T_{-c} \circ R_H$ are conformal diffeomorphisms of the sphere $\mathbb{S}^{m-1}$. The $(m-1)$-conformal volume of $\Phi$ equals $(a_n)^n \V_n (1) $ by \cite[Corollary 2.3 and p.267]{EI86}, and so
\begin{align}
 \lambda_{2}(\RP^{n},w)^{n/2} \,  \V_n(1)  &< 2n^{n/2} (a_n)^n \V_n (1) \nonumber\\ & =2 \V_n(1) \left( 2n+2 \right)^{\! n/2},   \nonumber
\end{align}
which finishes the proof of \autoref{thm}. 

\section{\bf Remark about \autoref{conj}}

The conjecture is based on the hope that in \autoref{volumedivide}, the sum of the volumes is maximized when $|c|<1$, $p \in \Phi (\RP^n)$, and $t \to 1$. In that limit, $\Phi (\RP^n) \cap H$ (non-reflected set) tends to the image of $n$-projective space under the Veronese map and $R_H(\Phi(\RP^n)) \cap H^{\text{int}}$ (reflected set) tends to an $n$-sphere embedded in the higher dimensional sphere, as explained below. This kind of “bubbling” is motivated by the proof of the conjecture in 2-dimension by Nadirashvili and Penskoi \cite{NA18}. Also, Petrides \cite{P22} proved that under certain Palais--Smale condition, the $k$-th eigenvalue for the compact $n$-dimensional manifold is maximal with a sequence of metrics that converges to that of “bubbling”. 

We have not been able to show that the sum of the volumes in \autoref{volumedivide} is maximal in the bubbling situation. In this section, though, we estimate the volume when the fold map degenerates ($t \to 1$) and separately when the M\"{o}bius transformation degenerates ($|c| \to 1$). Both cases support our conjecture.

First, we calculate that as the fold map $F_{p,t}$ degenerates when $t \to 1$, the volume approaches at most the volume of the projective space and a sphere. 
	\begin{proposition}[Fold map degenerates] Given an $N$-dimensional imbedded smooth surface $\gamma: \overline{\B}^N \to {\mathbb S}^{M}$, $1 \leq N <M$, we have
		\begin{equation}\label{folddegen}
\limsup_{t \to 1} {\mathcal H}_N (F_{p,t} \circ \gamma (\overline{\B}^N) ) \leq {\mathcal H}_N({\mathbb S}^N) +{\mathcal H}_N(\gamma(\overline{\B}^N)),
		\end{equation}
where $p \in {\mathbb S}^M$ is fixed.
	\end{proposition}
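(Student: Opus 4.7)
The plan is to split the image along the fold and bound the two pieces separately. Since $F_{p,t}$ equals the identity on $H_{p,t}$ and equals $R_{p,t}$ on the complement, subadditivity of Hausdorff measure gives
\begin{equation*}
{\mathcal H}_N(F_{p,t} \circ \gamma(\overline{\B}^N)) \leq {\mathcal H}_N\bigl(\gamma(\overline{\B}^N) \cap H_{p,t}\bigr) + {\mathcal H}_N\bigl(R_{p,t}(\gamma(\overline{\B}^N) \cap H_{p,t}^c)\bigr).
\end{equation*}
The first term is at most ${\mathcal H}_N(\gamma(\overline{\B}^N))$ for every $t$, so the proposition reduces to proving $\limsup_{t \to 1}{\mathcal H}_N\bigl(R_{p,t}(\gamma(\overline{\B}^N) \cap H_{p,t}^c)\bigr) \leq {\mathcal H}_N({\mathbb S}^N)$.

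If $p \notin \gamma(\overline{\B}^N)$ this is immediate: the cap $H_{p,t}^c$ shrinks to $\{p\}$ as $t \to 1$ and is eventually disjoint from the compact image of $\gamma$. So assume $p \in \gamma(\overline{\B}^N)$, and let $\Sigma \subset {\mathbb S}^M$ be a great $N$-sphere through $p$ whose tangent space at $p$ agrees with that of $\gamma$ (such a $\Sigma$ is obtained by intersecting ${\mathbb S}^M$ with the $(N+1)$-plane in $\R^{M+1}$ spanned by $p$ and the tangent space to $\gamma$ at $p$). The key geometric observation is that $\Sigma$ is invariant under $R_{p,t}$: in stereographic coordinates from $-p$, the point $p$ is at the origin, $\Sigma$ becomes an $N$-plane $P$ through the origin, $R_{p,t}$ becomes inversion $x \mapsto \rho_t^2 x/|x|^2$ in a Euclidean sphere of some radius $\rho_t \to 0$ centered at the origin, and inversion in such a sphere preserves $N$-planes through the origin set-wise. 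Since $R_{p,t}$ swaps the two sides of $\partial H_{p,t}$, it follows that
\begin{equation*}
R_{p,t}(\Sigma \cap H_{p,t}^c) = \Sigma \cap H_{p,t}^{\mathrm{int}}, \qquad {\mathcal H}_N(\Sigma \cap H_{p,t}^{\mathrm{int}}) \leq {\mathcal H}_N(\Sigma) = {\mathcal H}_N({\mathbb S}^N).
\end{equation*}

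It remains to show that perturbing from $\Sigma$ to $\gamma$ costs only $o(1)$ in volume as $t \to 1$. In the same stereographic picture, $\gamma$ is a graph $\{(u,f(u)) : u \in P,\ |u| \leq \delta\}$ over $P$, where $f$ is smooth with $f(0) = 0$ and $Df(0) = 0$, so $|f(u)| = O(|u|^2)$ and $|Df(u)| = O(|u|)$. Writing $\psi(u) = \rho_t^2 (u,f(u))/|(u,f(u))|^2$ for the inversion of this graph and pulling the spherical Hausdorff measure back to $P$ via the stereographic conformal factor $\mu(x) = 2/(1+|x|^2)$, the substitution $R = \rho_t^2/|u|$ converts the volume of the reflected piece into
\begin{equation*}
\omega_{N-1}\, 2^N \int_{\rho_t}^{\infty} R^{N-1}(1+R^2)^{-N}\,dR \cdot \bigl(1 + O(\rho_t^2)\bigr),
\end{equation*}
where $\omega_{N-1}$ denotes the surface area of the unit $(N{-}1)$-sphere. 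This integral converges to ${\mathcal H}_N({\mathbb S}^N)$ as $\rho_t \to 0$ (one can verify the identity via the Beta function and Legendre duplication), which together with the earlier bound yields the claim.

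The main obstacle is precisely this last error estimate: one must check that the conformal expansion factor $\rho_t^2/|u|^2$ of the inversion does not amplify the graph perturbation $f$ enough to destroy the bound. The quadratic vanishing $|f(u)| = O(|u|^2)$ more than compensates the blow-up, contributing only a $1 + O(\rho_t^2)$ factor to the Jacobian uniformly in $u \in B_{\rho_t}(0) \cap P$; verifying this uniform estimate, and handling the corresponding conformal-factor correction from $\mu$, is where the careful bookkeeping sits.
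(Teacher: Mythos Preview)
Your proof is correct and follows essentially the same approach as the paper: split into the unfolded piece (bounded by the original volume) and the reflected piece, pass to stereographic coordinates from $-p$ where $R_{p,t}$ becomes Euclidean inversion in a shrinking sphere, and show by direct computation that the reflected volume tends to $\mathcal{H}_N(\mathbb{S}^N)$. The paper carries out the limiting integral via the rescaling $z=\epsilon^2\zeta$ and dominated convergence with an explicit dominator rather than your radial substitution $R=\rho_t^2/|u|$ with a uniform $1+O(\rho_t^2)$ error, and it does not single out the invariant great $N$-sphere $\Sigma$ as a comparison surface, but the analytic content is the same.
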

Here, we use the closed ball $\overline{\B}^N $ for its compactness, and the fold map $F_{p,t}$ acts on $\mathbb{S}^M$ (that is, $M$ is the “$m-1$” in \autoref{maps}).
\begin{proof}
Our main strategy is to decompose the image $\gamma (\overline{\B}^N)$ into two pieces, inside the cap $H_{p,t}$ and outside. Inside the cap, the fold map is the identity and so that part of $F_{p,t} \circ \gamma (\overline{\B}^N) $ has volume at most ${\mathcal H}_N(\gamma(\overline{\B}^N)) $. We will show that the fold of the piece outside has volume at most ${\mathcal H}_N({\mathbb S}^N)$ as $t \to 1$. For the rest of the proof, we may suppose the image $\gamma(\overline{\B}^N)$ contains $p$, since otherwise the image will lie entirely inside the cap when $t$ is close to 1 and there is nothing to prove.

Let us consider when the image $F_{p,t} \circ \gamma (\overline{\B}^N) $ is mapped onto the Euclidean space $\R^M$ under the stereographic projection $\Pi$, with $p$ mapping to the origin and $\partial H_{p,t}$ mapping to the sphere of radius $\epsilon$ centered at the origin, where $\epsilon \to 0$ as $t \to 1$. Write $\rho := \Pi \circ \gamma  : \overline{\B}^N \to {\R}^{M}$. After some appropriate  reparametrization of $\gamma$, we may assume that $\rho(0) = 0$ and its Jacobian $D\rho(0) =
\begin{pmatrix} 
I&0
\end{pmatrix}^T$, which is an $M \times N$ matrix.

Write $\B^M( \epsilon)$ as the image of the smaller spherical cap (the piece outside $H_{p,t}$) projected to the Euclidean space $\R^M$. The part of the surface intersecting the $\epsilon$-ball is written as $\rho: \Omega_{\epsilon} \to \B^M(\epsilon)$ where $\Omega_{\epsilon} \subset \overline{\B}^N$. 

Let the smooth surface $R_{\epsilon} \circ \rho: \Omega_\epsilon \to \R^M \setminus \B^M(\epsilon)$ be the fold of $\rho$ under the map $R_\epsilon :\R^M \to \R^M$ that reflects (inverts) the points in $\R^M$ across the boundary of $\B^M(\epsilon)$. It can be written as,
\[
R_{\epsilon} \circ  \rho (z) = \frac{\epsilon^2}{|\rho(z)|^2}\rho(z) 
\]
where $z \in \Omega_{\epsilon}$. Note that $R_\epsilon \circ \rho  =\Pi \circ F_{p,t} \circ \gamma $ on $\Omega_\epsilon$ since $R_\epsilon \circ \Pi = \Pi \circ R_{p,t}$. We want to estimate the limit of the weighted volume of $R_{\epsilon} \circ \rho(\Omega_\epsilon)$ from above,
\begin{align}
\label{volGam}
\limsup_{\epsilon \to 0}\int_{R_{\epsilon} \circ \rho(\Omega_\epsilon)}\left( \frac{2}{1+|z|^2} \right)^N  \, d \mathcal{H}_N \leq \mathcal{H}_N (\mathbb S^N),
\end{align}
where $\mathcal{H}_N$ is a Hausdorff measure for $N$-dimensional surface and $ 2/(1+|z|^2)$ is the conformal factor for stereographic projection.

The idea of the following proof is that the smooth surface near 0 can be approximated as a flat $N$-plane and so $R_\epsilon$ applied to that surface is approximately an $N$-plane outside the ball of radius $\epsilon$. 

Write $R=R_1$ for inversion in the unit sphere, so that $R_\epsilon = \epsilon^2 R$ and $R(z)$ has a conformal factor of $1/|z|^2$. Hence, the integral \autoref{volGam} becomes,
\begin{align}
&\int_{\rho(\Omega_\epsilon)}\left( \frac{2}{1+|\epsilon^2 R(z)|^2} \right)^N \left(\frac{\epsilon}{|z|} \right)^{2N} d \mathcal{H}_N \nonumber
\\&= \int_{\Omega_{\epsilon}}\left( \frac{2}{1+|\epsilon^2 R( \rho(z))|^2} \right)^N  \left(\frac{\epsilon}{|\rho(z)|} \right)^{2N}|D \rho(z)|  \, d \mathcal{H}_N  \nonumber \\
&=\int_{\Omega_{\epsilon}} \left( \frac{2\epsilon^2/|\rho(z)|^2}{1+\epsilon^4/|\rho(z)|^2}\right)^N |D \rho(z)| \, d \mathcal{H}_N , \label{volOme}
\end{align}
where $|D \rho(z)|:\R^N \to \R$ is the Jacobian determinant of $\rho$. By a change of variable with $z = \epsilon^2 \zeta$, the integrand in \autoref{volOme} becomes
\begin{align}
\left( \frac{2\epsilon^2/|\rho(\epsilon^2 \zeta)|^2}{1+\epsilon^4/|\rho(\epsilon^2 \zeta)|^2}\right)^N |D \rho(\epsilon^2 \zeta)| \epsilon^{2N} =  \left( \frac{2\epsilon^4}{|\rho(\epsilon^2 \zeta)|^2+\epsilon^4}\right)^N |D \rho(\epsilon^2 \zeta)|. \label{volOmeeps}
\end{align}
Note the domain $\Omega_\epsilon/ \epsilon^2$ approaches all of $\R^N$ when $\epsilon \to 0$ since the points in $\Omega_\epsilon$ have norm at most $O(\epsilon)$. The linear approximation of $\rho(z) \approx (z,0)$ at 0 implies that there exists an lower bound such that $|\rho(z)|^2 \geq |z|^2/2$ for small enough $\epsilon$, with $z \in \Omega_\epsilon$. Hence we can find an integrable dominator on $\R^N$ for the right hand side of \autoref{volOmeeps} as,
\begin{align}
1_{\Omega_\epsilon/ \epsilon^2}(\zeta) \left( \frac{2\epsilon^4}{|\rho(\epsilon^2 \zeta)|^2+\epsilon^4}\right)^N|D \rho(\epsilon^2 \zeta)|  &\leq  \left( \frac{2\epsilon^4}{ |\epsilon^2 \zeta|^2/2 +\epsilon^4}\right)^N 2  \nonumber \\
&\leq  \left( \frac{2}{ |\zeta|^2/2 +1}\right)^N 2 \qquad \text{where $\zeta \in \R^{N}$}. \label{volupper}
\end{align}
By using the upper bound \autoref{volupper}, we apply the dominated convergence to \autoref{volOmeeps} as $\epsilon \to 0$. To find the limit of the integrand, we expand with the Taylor expansion $\rho(\epsilon^2 \zeta) = \epsilon^2 \zeta + O(\epsilon^4)$ when $\epsilon$ is close enough to 0 and $\zeta$ is fixed. Then we evaluate the limit \autoref{volOmeeps} as $\epsilon \to 0$ as,
\begin{align}
 \int_{\R^N}\left( \frac{2}{|\zeta|^2 +1} \right)^N   d \mathcal{H}_N \label{volS}
\end{align}
by dominated convergence. The last integral is equal to $\mathcal{H}_N ({\mathbb S}^N)$. 

Note that $\R^N \setminus \Omega_\epsilon$ approaches to all of $\R^N$ as $\epsilon \to 0$ by the weak convergence of the measure in \autoref{sec: twostep}. The volume of this part becomes $\mathcal{H}_N(\gamma(\R^N))$, and added with \autoref{volS}, we have \autoref{folddegen} as $\epsilon \to 0$.
\end{proof}

Next we show that as the M\"{o}bius transformation degenerates, the volume of the image of the surface is at most that of ${\mathbb S}^m$. Recall that any point $y \in {\mathbb S}^M$ under the M\"{o}bius transformation $T_x(y)$ approaches $\tilde{x}$ as $x \to  \tilde{x}$ with $|\tilde{x}|=1$, except at $\tilde{y} =-\tilde{x}$. This implies that for any $N $-dimensonal submanifold where $N \leq M$ in ${\mathbb S}^M$,  the image under $T_x$ collapses to a point as $x$ approaches to the boundary unless the submanifold contains $\tilde y$, in which case the image stretches out to an $N$-sphere as $x \to \tilde{x}$.  

	\begin{proposition}[M\"{o}bius transformation degenerates] Given an $N$-dimensional embedded smooth surface $\omega: \overline{\B}^N \to {\mathbb S}^{M}$, $ 1 \leq N \leq M $, we have the following upper bound, 
		\begin{equation}
\limsup\limits_{|x|\rightarrow 1} \mathcal{H}_N(T_{x} \circ \,\omega(\overline{\B}^N)) \leq \mathcal{H}_N({\mathbb S}^N). \label{txdeg}
		\end{equation}
	\end{proposition}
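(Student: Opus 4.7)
The plan is to use the conformal change-of-variables formula. Since $T_x$ is a conformal diffeomorphism of ${\mathbb S}^M$ with pointwise Jacobian factor $(1-|x|^2)/|x+y|^2$ at $y\in {\mathbb S}^M$,
\[
\mathcal{H}_N(T_x \circ \omega(\overline{\B}^N)) = \int_{\omega(\overline{\B}^N)}\left( \frac{1-|x|^2}{|x+y|^2}\right)^{\!N} d\mathcal{H}_N(y).
\]
Take a sequence $x_k \in \B^M$ with $|x_k| \to 1$. After extracting a subsequence, assume $x_k \to \tilde x \in {\mathbb S}^M$. The covariance identity $T_{Rx}=R\,T_x\,R^{-1}$ for rotations $R\in O(M)$, combined with rotation-invariance of Hausdorff measure, reduces the problem to the radial case $x_k=(1-s_k)\tilde x$ with $s_k\to 0^+$: simply rotate $\omega(\overline{\B}^N)$ so that $x_k/|x_k|$ becomes $\tilde x$.

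If $-\tilde x \notin \omega(\overline{\B}^N)$, then by compactness $|y+\tilde x|\geq\delta>0$ uniformly on the image, so $|x_k+y|\geq\delta/2$ for $k$ large, and the integrand is dominated by $C(1-|x_k|^2)^N\to 0$, giving limit $0$, which is well within the asserted bound.

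In the main case $-\tilde x \in \omega(\overline{\B}^N)$, since $\omega$ is an embedding, $\omega^{-1}(-\tilde x)$ is a single point $p_0$. I would split the image into a chart piece $V_r = \{y(u) : |u|\leq r\}$ and its complement, where $y(u) = -\tilde x + \sum_i u_i v_i + O(|u|^2)$ with $\{v_i\}_{i=1}^N$ an orthonormal basis of the tangent space to $\omega(\overline{\B}^N)$ at $-\tilde x$ and $d\mathcal{H}_N(y(u))=(1+O(|u|^2))\,du$. The complement stays at positive distance from $-\tilde x$, so its contribution vanishes as in the easy case. For $V_r$, Taylor expansion with $x_k=(1-s_k)\tilde x$ gives $|x_k+y(u)|^2=s_k^2+|u|^2+O(s_k|u|^2+|u|^3)$ and $1-|x_k|^2=s_k(2-s_k)$. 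After the rescaling $u=s_k\xi$, $du=s_k^N\,d\xi$, the integrand $\times du$ becomes
\[
\left(\frac{2+O(s_k)}{1+|\xi|^2+O(s_k(1+|\xi|^2))}\right)^{\!N}(1+O(s_k^2|\xi|^2))\,d\xi,
\]
which converges pointwise in $\xi$ to $(2/(1+|\xi|^2))^N\,d\xi$. Choosing $r$ small enough that $|y(u)+\tilde x|^2\geq |u|^2/2$ provides a uniform integrable majorant on $\R^N$ of the form $C(1+|\xi|^2)^{-N}$, so dominated convergence yields
\[
\lim_{k\to\infty}\int_{V_r}\left(\frac{1-|x_k|^2}{|x_k+y|^2}\right)^{\!N}d\mathcal{H}_N = \int_{\R^N}\left(\frac{2}{1+|\xi|^2}\right)^{\!N}d\xi = \mathcal{H}_N({\mathbb S}^N),
\]
the last identity being the standard stereographic expression for the area of the unit $N$-sphere.

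The chief technical obstacle is establishing the uniform integrable majorant on a rescaled domain that expands to all of $\R^N$; it follows from the quadratic lower bound $|y(u)+\tilde x|^2\geq|u|^2/2$ coming from the graph structure of the embedded surface near $-\tilde x$. A minor subtlety is that $p_0$ could lie on $\partial\overline{\B}^N$, in which case the tangent coordinates cover only a half-space and the limit becomes at most $\tfrac12\mathcal{H}_N({\mathbb S}^N)$, still well within the asserted bound.
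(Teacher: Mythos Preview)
Your argument is correct and follows the same route as the paper: express the volume via the conformal factor $(1-|x|^2)/|x+y|^2$, split into a neighborhood of $-\tilde x$ and its complement, rescale the near piece, and apply dominated convergence to obtain $\int_{\R^N}(2/(1+|\xi|^2))^N\,d\xi=\mathcal H_N(\mathbb S^N)$. The only differences are cosmetic: the paper parametrizes through the spherical angle $\theta(y,\omega(z))$ and a law-of-sines estimate where you Taylor-expand $|x_k+y(u)|^2$ directly (which is cleaner), and it lets the chart center $z(y)$ track $-x/|x|$ rather than rotating to the radial case---your rotation step makes the surface $R_k^{-1}\omega(\overline\B^N)$ depend on $k$, but since $R_k\to I$ the chart data and the majorant are uniform in $k$, so the argument goes through unchanged.
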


	\begin{proof}
We write the south pole of ${\mathbb S}^M$ as $\mathbf{s}$ and its north pole as $\mathbf{n}$. Without loss of generality, we may assume that $x \to \mathbf{n}$. Then first assume that $\mathbf{s} \in \omega(\B^N)$.  We deal with the case when $\mathbf{s} \notin \omega(\B^N)$ later. We may parametrize the surface so that $\omega(0) = \mathbf{s}$ and $D\omega(0) =
\begin{pmatrix} 
I&0
\end{pmatrix}^T$. The left side of \autoref{txdeg} becomes 
		\begin{align}
\int_{T_{x} \circ \, \omega(\B^N)}  d \mathcal{H}_N&= \int_{\omega(\B^N)}\left(\frac{1-|x|^2}{1+|x|^2+2 x \cdot s}\right)^{\! N}  d \mathcal{H}_N(s)\qquad \text{(see \cite[2.1.7]{S16} and \cite[(30)]{A89}})\nonumber 
\\ &=\int_{\B^N}\left(\frac{1-|x|^2}{1+|x|^2+2x \cdot \omega(z)}\right)^{\! N}|D\omega(z)| \,  d \mathcal{H}_N(z)\label{intmob},
		\end{align}
where $|D\omega(z)|$ is the Jabobian determinant of $\omega$. Note that the integrand is similar to the Poisson kernel. We want to show that the integral in \autoref{intmob} is bounded above by the volume of the sphere as $x \to \mathbf{n}$. Note that the factor in \autoref{intmob} can be rewritten as 
$$ \frac{1-r^2}{1+r^2-2r \cos\theta(y, \omega(z))}
$$ in terms of $r =|x|$, $y = -x/|x|$, and the angle $\theta$ between $y$ and $\omega(z)$. Since $x \to \mathbf{n}$, we have $y \to \mathbf{s}$ and so $y \cdot \mathbf{s} \to 1$ and $r \to 1$.

The main idea of the proof is that when $x$ is close to $\mathbf{n}$, we divide $\overline{\B}^N$ into two parts: the ball at $0$ with some radius $\delta$ and the rest of $\overline{\B}^N$. We prove the following statement: For fixed $\delta >0$, there exist parameters $C(\delta)$ and $\alpha(\delta)$ such that $C(\delta) \to 1$ and $\alpha(\delta) \to 1$ as $\delta \to 0$ and
		\begin{align}
& \limsup_{x \to \mathbf{n}}\int_{\B^N \setminus \B^{N}(\delta) }\left(\frac{1-r^2}{1+r^2-2r  \cos \theta(y, \omega(z))}\right)^{\! N} |D\omega(z)|  \,   d \mathcal{H}_N(z) = 0  \label{intnotball}, \\
& \limsup_{x \to \mathbf{n}}\int_{\B^{N}(\delta)}\left(\frac{1-r^2}{1+r^2-2r \cos \theta(y, \omega(z))}\right)^{\! N} |D\omega(z)| \, d \mathcal{H}_N(z)\leq \left(\frac{C(\delta)}{ \alpha(\delta)} \right)^N \mathcal{H}_N({\mathbb S}^N),  \label{intball}
		\end{align}
where $y=-x/|x| \to \mathbf{s}$ and $r \to 1$ as $x \to \mathbf{n}$. Hence, the limsup of \autoref{intmob} as $x \to \mathbf{n}$ is at most $\left(C(\delta)/\alpha(\delta) \right)^N  \mathcal{H}_N({\mathbb S}^N) $, which tends to $ \mathcal{H}_N({\mathbb S}^N )$ as $\delta \to 1$.

To show \autoref{intnotball}, we use the fact that the angle $\theta(y, \omega(z))$ is bounded below away from 0 when $y$ is near $\mathbf{s}$ and $|z| \geq \delta$. Hence, the integrand in \autoref{intnotball} tends to 0 uniformly as $x \to \mathbf{n}$. 


To show \autoref{intball}, the first step is to show that the distance on the sphere $\theta(y, \omega(z))$ can be approximated by the Euclidean distance from $z(y)$ to $z$, where for $y$ close to $\mathbf s$, $z(y)$ is the unique point $z  \in \overline{\B}^{N}(\delta)$ such that $|y-\omega(z)|$ is minimal. That is, $\omega(z(y))$ is a closest point to $y$ on the surface, and such that $z(y)$ is unique for $y$ near the surface.

We claim that there exists $\alpha(\delta)>0$ such that	
	\begin{equation}
\frac{\theta(y, \omega(z))}{|z(y)-z|}\geq \alpha(\delta) \label{angle}
	\end{equation}
for all $y$ near $\mathbf{s}$ and $z \in \overline{\B}^N(\delta)$. We show that $\alpha(\delta)  \to 1$ as $\delta \to 0$. Since spherical distance is greater than Euclidean distance,
		\begin{align*}
\frac{\theta(y, \omega(z))}{|z(y)-z|} &\geq \frac{|y-\omega(z)|}{|z(y)-z|} \\
&\geq \beta(\delta)  \frac{|y-\omega(z)|}{|\omega(z(y))-\omega(z)|} ,
		\end{align*}
where the constant satisfies $\beta(\delta) \to 1$ as $\delta \to 0$, since $\beta(\delta)$ is a lower bound of $|\omega(z(y))-\omega(z)|/|z(y)-z|$ and $D\omega(0) =
\begin{pmatrix} 
I&0
\end{pmatrix}^T$.
Consider the triangle with vertices at $y$, $\omega(z(y))$ and $\omega(z)$. Write the sidelengths as $a=|y-\omega(z(y))|$, $b=|\omega(z(y))-\omega(z)|$, and $c =|y-\omega(z)|$, so that the last quantity displayed is $\beta(\delta) c/b$. By the law of sines,
$$
c^2 \geq b^2 \sin^2 \eta, 
$$
where the angle $\eta$ at point $\omega(z(y))$ is close to $\pi/2$ because the side from $\omega(z(y))$ to $y$ is normal to the surface and the vector from $\omega(z(y))$ to $\omega(z)$ is nearly tangential to the surface. In this case, our lower bound $\alpha(\delta)$ is defined as $\beta(\delta)  c/b  \geq \beta(\delta)  | \sin \eta| := \alpha(\delta)$, where $\alpha(\delta) \to 1$ because $\beta(\delta) \to 1$ and $\eta(\delta) \to \pi/2$ as $\delta \to 0$. This finishes the proof of \autoref{angle}. 


Based on the lower bound \autoref{angle}, the second step is to show that the integral in \autoref{intball} has the following upper bound, 
		\begin{align*}
&\int_{\B^{N}(\delta)}\left(\frac{1-r^2}{1+r^2-2r \cos \theta(y, \omega(z))}\right)^{\! N}|D\omega(z)|\,  d \mathcal{H}_N\\
&\leq \int_{\B^N(\delta)}\left(\frac{1-r^2}{1+r^2-2r\cos (\alpha(\epsilon)|z-z(y)|)}\right)^{\! N} |D\omega(z)|\,   d \mathcal{H}_N\\
&\leq C(\delta)^N \int_{\B^N(\delta)}\left(\frac{1-r^2}{1+r^2-2r \cos (\alpha(\epsilon)|z-z(y)|)}\right)^{\! N}  \, d \mathcal{H}_N,
		\end{align*}
where $C(\delta)$ denotes the maximum of $|D\omega(z)|$ on $\B^N(\delta)$. Note $C(\delta) \to 1$ as $\delta \to 0$. Next, we change the variable with $\alpha(\delta)(z-z(y)) = (1-r)\zeta$ to write the upper bound as follows,
		\begin{align*}
&\int_{\B^{N}(\delta)}\left(\frac{1-r^2}{1+r^2-2r \cos \theta(y, \omega(z))}\right)^{\! N} |D\omega(z)| \,  d \mathcal{H}_N  \\
&\leq \left(\frac{C(\delta)}{\alpha(\delta)}\right)^N\int_{\B^N \left(2 \delta \alpha(\delta)/ (1-r) \right)} \left(\frac{1-r^2}{1+r^2 -2r\cos (1-r)|\zeta|}\right)^{\! N} \left(1-r \right)^N \,   d \mathcal{H}_N .					
		\end{align*}
The domain approaches to all of $\R^N$ as $r\to 1$. By considering the Maclaurin series, we use dominated convergence and find the limit of the upper bound. Note that $\cos(1-r)|\zeta| \leq 1- (1-r)^2|\zeta|^2/2 + (1-r)^4  O(|\zeta|^4)$, since $(1-r)|\zeta| \leq 2\delta \alpha(\delta)$ is small. Then the last integral has the following upper bound:
		\begin{align*}
&\int_{\B^N \left(2 \delta \alpha(\delta)/ (1-r) \right)} \left(\frac{(1-r)^2(1+r)}{1+r^2-2r( 1- (1-r)^2|\zeta|^2/2 + (1-r)^4  O(|\zeta|^4))}\right)^{\! N} \,   d \mathcal{H}_N (\zeta) \\
& =\int_{\B^N \left(2 \delta \alpha(\delta)/ (1-r) \right)}\left(\frac{(1-r)^2(1+r)}{(1-r)^2 + r(1-r)^2|\zeta|^2- (1-r)^4 O(|\zeta|^4)}\right)^{\! N}  \,   d \mathcal{H}_N(\zeta)\\
& = \int_{\B^N \left(2 \delta \alpha(\delta)/ (1-r) \right)}\left(\frac{1+r}{1 + |\zeta|^2(r- (1-r)^2 O(|\zeta|^2))}\right)^N  \,  d \mathcal{H}_N(\zeta)\\
& \to \int_{\R^N}\left( \frac{2}{1+|\zeta|^2}\right)^{\! N}  \,  d \mathcal{H}_N(\zeta)
		\end{align*}
as $r \to 1$, by dominated convergence. The last integral is equal to $\mathcal{H}_N({\mathbb S}^N)$.

Lastly, assume that south pole $\mathbf{s} \notin \omega(\B^N)$.  In case when $\mathbf{s} \notin \omega(\overline{\B}^N)$, there exists $\delta$ small enough so that the volume vanishes in the limit.  In case when $\mathbf{s} \in \omega(\partial \overline{\B}^N)$, consider a bigger ball containing $\overline{\B}^N$ to enlarge the surface $\omega(\overline{\B}^N)$. The south pole lies in the interior of this bigger ball and so the volume satisfies the conclusion \autoref{txdeg} by our work above. 

\end{proof}

\appendix\section{Calculations}
Here, we check that ratio of the right-hand sides of \autoref{mainequ} and \autoref{mainequ2} approaches 1 as the dimension $n$ tends to $\infty$.
\begin{lemma} 
\label{ratio}
For $n \geq 2$, let us denote $A_n= ((2n+2)^{n/2}+2n^{n/2})^{2/n}$ and $B_n= 2^{2/n}(2n+2)$. Then $A_n/B_n$ is strictly smaller than 1 and approaches 1 as $n \to \infty$.
\end{lemma}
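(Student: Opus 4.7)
My plan is to reduce the lemma to analyzing a single simpler expression. Raising both sides to the power $n/2$, I compute
\[
\frac{A_n^{n/2}}{B_n^{n/2}} = \frac{(2n+2)^{n/2} + 2n^{n/2}}{2(2n+2)^{n/2}} = \frac{1}{2} + \left(\frac{n}{2n+2}\right)^{\!n/2},
\]
so that
\[
\frac{A_n}{B_n} = \left(\frac{1}{2} + \left(\frac{n}{2n+2}\right)^{\!n/2}\right)^{\!2/n}.
\]
This single identity is the heart of the proof; everything else follows from elementary estimates.

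For the strict inequality $A_n/B_n < 1$, it suffices to show that the quantity inside the outer exponent is strictly less than $1$. Since $n/(2n+2) < 1/2$ for all $n \geq 2$, I get
\[
\left(\frac{n}{2n+2}\right)^{\!n/2} < \left(\frac{1}{2}\right)^{\!n/2} \leq \frac{1}{2}
\]
for $n \geq 2$, so $\tfrac{1}{2} + (n/(2n+2))^{n/2} < 1$, and hence $A_n/B_n < 1$.

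For the limit as $n \to \infty$, I write
\[
\left(\frac{n}{2n+2}\right)^{\!n/2} = \frac{1}{2^{n/2}}\left(\frac{n}{n+1}\right)^{\!n/2},
\]
and note that the first factor tends to $0$ while the second tends to $e^{-1/2}$, so the whole expression tends to $0$. Therefore
\[
\frac{1}{2} + \left(\frac{n}{2n+2}\right)^{\!n/2} \longrightarrow \frac{1}{2},
\]
and raising to the vanishing power $2/n$ gives $(1/2)^{2/n} \to 1$, so $A_n/B_n \to 1$. There is no real obstacle here; the only slightly subtle point is confirming the strict inequality at the borderline case $n=2$, which is handled because $n/(2n+2) = 1/3 < 1/2$, so the estimate above remains strict.
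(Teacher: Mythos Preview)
Your proof is correct. Your route differs slightly from the paper's: you compute the exact identity
\[
\Big(\frac{A_n}{B_n}\Big)^{n/2} \;=\; \frac{1}{2} + \Big(\frac{n}{2n+2}\Big)^{n/2}
\]
and read off both the strict upper bound and the limit directly from this single expression. The paper instead argues by a sandwich: it shows $A_n < B_n$ by bounding $2n^{n/2} < (2n+2)^{n/2}$, and separately shows $A_n > 2^{-2/n} B_n$ via the auxiliary inequality $2^{2/n} n > n+1$, then squeezes. Your computation is a bit more economical since it avoids the need for a separate lower bound, while the paper's squeeze makes the rate $A_n/B_n \geq 2^{-2/n}$ explicit. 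Both arguments are elementary and equally valid.
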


\begin{proof}
Note that $A_n/B_n < 1$ for all $n \geq 2$ because 
\begin{align*}
 A_n  &= \left((2n+2)^{n/2}+2n^{n/2}\right)^{2/n}   
\\& < \left(2(2n+2)^{n/2} \right)^{2/n} \qquad \text{since $2 \leq 2^{n/2}$ and $2n< 2n+2$}
\\& = 2^{2/n}(2n+2) = B_n.
\end{align*}
Further, $A_n/B_n  \geq 2^{-2/n}  \to 1$ as $n \to \infty$, since
\begin{align*}
 A_n &=\left((2n+2)^{n/2}+2n^{n/2}\right)^{2/n}   
\\& > \left((2n+2)^{n/2}+(n+1)^{n/2}\right)^{2/n} \qquad \text{by the inequality $2^{2/n}n > n+1  $ for $n \geq 2$}
\\& = \left(2^{n/2}+1\right)^{2/n} (n+1) > 2(n+1) = 2^{-2/n}  B_n.
\end{align*}
\end{proof}

\section*{\textbf{Acknowledgements}}
I am grateful for support from the University of Illinois through the Campus Research Board award RB22004 (to Richard Laugesen), the Lois M. Lackner fellowship from the Department of Mathematics, and the National Science Foundation award \#2246537 (to Richard Laugesen).

\bibliographystyle{plain}


\end{document}